\documentclass[a4paper,reqno,11pt,oneside]{amsart}

 \usepackage{tikz}
\usetikzlibrary{shadings}
\usepackage{tikz-3dplot}

\tdplotsetmaincoords{70}{110}

\usepackage[utf8]{inputenc}
\usepackage[T1]{fontenc}
\usepackage{tikz}
\usepackage{tikz-3dplot} 

\usepackage{amsmath,amssymb,amsfonts,amsthm}
\usepackage{geometry}
\usepackage{graphicx}
\usepackage{mathrsfs}
\usepackage{latexsym}
\usepackage{color}
\usepackage{hyperref}
\usepackage{pgfplots}
\usepackage{bm}
\usepackage{a4wide}
\usepackage[numbers,sort&compress]{natbib}
\usepackage{url}
\usepackage{amscd}
\usepackage{indentfirst}
\usepackage{enumerate}
\usepackage{appendix}
\usepackage{verbatim}
\usepackage{titlesec}
\usepackage{mathabx}

\titleformat{\subsection}
   {\bfseries\large}
  {\thesubsection}{1em}{}

\numberwithin{equation}{section}
\renewcommand{\theequation}{\arabic{section}.\arabic{equation}}
\renewcommand\thesection{\arabic{section}}
\renewcommand{\thesubsection}{\arabic{section}.\arabic{subsection}}

\titleformat{\section}
   {\centering\Large\bfseries}
  {\thesection}{1em}{}

\allowdisplaybreaks[4]

\newcommand{\R}{\mathbb{R}}

\newtheorem{Thm}{Theorem}[section]
\newtheorem{Lem}[Thm]{Lemma}

\newtheorem{Prop}[Thm]{Proposition}

\newtheorem{Rem}[Thm]{Remark}

\begin{document}

\title[Blowing-up solutions to a critical 4D Neumann system in a competitive regime ]{Blowing-up solutions to a critical 4D Neumann system in a competitive regime}

\author{Qing Guo}
\address[Qing Guo]{College of Science, Minzu University of China, Beijing 100081, China}
\email{guoqing0117@163.com}

\author{Angela Pistoia}
\address[Angela Pistoia]{Dipartimento SBAI, Sapienza Universit\`a di Roma,
via Antonio Scarpa 16, 00161 Roma, Italy. }
\email{angela.pistoia@uniroma1.it}

  \author{Shixin Wen}
  \address[Shixin Wen]{School of Mathematics and Statistics, Central China Normal University, Wuhan 430079, China} \email{sxwen@mails.ccnu.edu.cn}

\thanks{A.Pistoia has been   partially supported by
the MUR-PRIN-20227HX33Z ``Pattern formation in nonlinear phenomena'' and
the INdAM-GNAMPA group. Q. Guo has been supported by the National Natural Science Foundation of China (Grant No. 12271539).}

\subjclass{35B25, 35J47, 35Q55}

  \keywords{ Critical Sobolev exponent; Neumann boundary; Lyapunov-Schmidt reduction.}
\begin{abstract}
We build blowing-up solutions to   the critical elliptic system with Neumann boundary condition,
   \begin{equation*}
    \begin{cases}
    -\Delta u_1 + \lambda u_1 = u_1^{3} -\beta u_1u_2^2  & \text{in } \Omega, \\
    -\Delta u_2 + \lambda u_2 = u_2^{3} -\beta u_1^2u_2 & \text{in } \Omega, \\
    \frac{\partial u_1}{\partial\nu} =  \frac{\partial u_2}{\partial\nu} = 0, & \text{on } \partial \Omega,
    \end{cases}
    \end{equation*}
    when \( \lambda>0 \) is sufficiently large in a competitive regime (i.e.  $ \beta>0$) and in a domain $\Omega\subset\mathbb R^4$ with smooth protrusions.
\end{abstract}

\date{}\maketitle

\section{Introduction}
In this paper, we investigate the following critical elliptic system with Neumann boundary conditions:
\begin{equation}\label{n-4}
\begin{cases}
-\Delta u_1+\lambda u_1=u_1^3-\beta u_1u_2^2  & \text{in }\Omega,\\
-\Delta u_2+\lambda u_2=u_2^3-\beta u_1^2u_2 & \text{in }\Omega,\\
\partial_\nu u_1=\partial_\nu u_2=0, & \text{on }\partial\Omega,
\end{cases}
\end{equation}
where $\Omega\subset \mathbb{R}^4$ is a smooth bounded domain, $\lambda>0$ is a large parameter, and $\beta$ is a real coupling constant.

Systems of this type arise naturally in the study of coupled Schrödinger equations and appear in various contexts, such as nonlinear optics and Bose--Einstein condensates (see, for instance, \cite{Phy1,phy-2}). Our primary interest lies in the construction of boundary blow-up solutions to \eqref{n-4} by means of the Lyapunov--Schmidt reduction method.

A classical prototype for \eqref{n-4} is the scalar Neumann problem:
\begin{equation}\label{single-equ}
\begin{cases}
-\Delta u+\lambda u=u^q,\quad u>0, & \text{in }\Omega,\\
\partial_\nu u=0, & \text{on }\partial\Omega,
\end{cases}
\end{equation}
where $q\in (1,\frac{N+2}{N-2}]$ if $N\ge 3$, or $q>1$ if $N=1,2$. Problem \eqref{single-equ} is closely related to activator--inhibitor systems and pattern formation theories; we refer the reader to \cite{wei-yan-24} and the references therein for a detailed overview.

The subcritical case ($q < \frac{N+2}{N-2}$ if $N\ge3$) has been extensively studied, leading to a vast literature concerning the existence, multiplicity, and concentration behavior of solutions. An exhaustive list of these results can be found in the Ni's survey \cite{wei-yan-24}.

In contrast, the critical case $q=\frac{N+2}{N-2}$ poses significant challenges due to the inherent lack of compactness. Adimurthi and Mancini \cite{adi-man}, along with Wang \cite{rey-1997-20}, established foundational existence results for large $\lambda$. Further insights into symmetry and singular behavior were provided in \cite{wei-rey-26,wei-rey-28}. The role of boundary geometry and mean curvature was further elucidated in \cite{wei-rey-2,rey-1999-3,rey-1999-4,wei-rey-36}, while high-energy and multi-peaked solutions were explored in \cite{wei-rey-37,wei-rey-38}. 

Notably, Rey \cite{Rey1997} constructed multi-peak solutions in dimensions $N\geq 6$, showing that  distinct non-degenerate critical points of the mean curvature with positive values can generate solutions that blow up at these points as $\lambda \to \infty$. The lower-dimensional cases ($N=3,4,5$) are considerably more delicate. As pointed out by Rey in \cite{1999-REY}, constructing solutions that blow up at a single point requires a profile that is a highly accurate refinement of the one used in higher dimensions. Furthermore, Wei and Yan \cite{Wei-Yan} constructed arbitrarily many boundary peak solutions concentrating at strict local minima of the mean curvature. Related results for slightly subcritical and supercritical problems were obtained in \cite{wei-rey-2005-34,wei-rey-2005,dmp}.

Compared to the scalar equation, the theory for coupled Neumann systems is far less developed. In the subcritical regime, singularly perturbed coupled Schrödinger systems have been studied via variational methods \cite{tang-201-jmaa,Tan-syn,tang-segre}, yielding least energy solutions as well as synchronized and segregated multi-peak solutions. However, the critical case remains poorly understood. Recent variational results for critical Neumann systems appear in \cite{yang-jf,arxiv-2025}, where \cite{arxiv-2025} establishes the existence of non-constant least energy solutions in both cooperative ($\beta < 0$) and competitive ($\beta > 0$) regimes. To the best of our knowledge, the existence of boundary blow-up solutions to \eqref{n-4} remains an open problem; this paper aims to provide a first step in this direction.

Motivated by the scalar constructions in \cite{1999-REY, Wei-Yan} and the reduction scheme for critical Schrödinger systems in \cite{Pistoia2017}, we construct solutions $(u_{1\lambda}, u_{2\lambda})$ to \eqref{n-4} that blow up at different boundary points for large $\lambda$. Our main result is as follows:

\begin{Thm}\label{xthmy}
Assume that the mean curvature function $H$ admits two distinct strict local maximum points $\xi_i^*$ with $H(\xi_i^*)>0$ for $i=1,2$. For any $\beta>0$, there exists $\lambda_0>0$ such that for any $\lambda \in (\lambda_0, +\infty)$, system \eqref{n-4} admits a solution $(u_{1\lambda}, u_{2\lambda})$ such that $u_{i\lambda}$ blows up at $\xi_i^*$ as $\lambda \to +\infty$.
\end{Thm}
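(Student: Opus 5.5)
We plan a Lyapunov--Schmidt reduction in the spirit of \cite{1999-REY,Wei-Yan}, adapted to the system as in \cite{Pistoia2017}. First rescale: set $\varepsilon=\lambda^{-1/2}$ and $v_i(x)=\varepsilon u_i(\varepsilon x)$. Then \eqref{n-4} becomes
\begin{equation*}
-\Delta v_i+v_i=v_i^3-\beta v_iv_j^2\ \text{ in }\Omega_\varepsilon=\varepsilon^{-1}\Omega,\qquad \partial_\nu v_i=0 \text{ on }\partial\Omega_\varepsilon ,
\end{equation*}
with $j\ne i$. Let $U_{\mu,\xi}(x)=\frac{2\sqrt2\,\mu}{1+\mu^2|x-\xi|^2}$ be the Aubin--Talenti bubble solving $-\Delta U=U^3$ in $\mathbb R^4$.

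The ansatz is $(W_1,W_2)$, where $W_i$ is a bubble $U_{\mu_i,\xi_i/\varepsilon}$ centred at boundary points $\xi_i\in\partial\Omega$ near $\xi_i^*$, with concentration $\mu_i\to\infty$. Because $N=4$ is low-dimensional, $W_i$ must be a refined profile as in \cite{1999-REY}. Concretely, we cut off the bubble, straighten the boundary near $\xi_i$, and add the correction that solves the linearized problem with the Neumann defect caused by curvature (of order $\mu^{-1}$ relative to the bubble). We also add the correction accounting for the term $+v_i$, which in 4D produces the logarithmic contribution $\mu^{-2}\log\mu$. Since $\xi_1^*\neq\xi_2^*$, the two bubbles are at distance $\sim\varepsilon^{-1}$ in the rescaled domain. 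The coupling term $\beta W_1W_2^2$ is therefore small, of size $O(\mu_1^{-1}\mu_2^{-2}\varepsilon^{-\cdots})$ away from each peak, and it is negligible in the reduced energy at the relevant order. In particular, $\beta>0$ enters only through these interaction errors, and its sign plays no role at leading order.

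The next step is the linear theory. Let $Z_i^k$ ($k=0,\dots,3$) be the kernel elements of the linearization at $U$ corresponding to dilation and the three tangential translations; normal translations are excluded by the Neumann condition. We will prove that the linearized operator
\begin{equation*}
L(\phi_1,\phi_2)=\bigl(-\Delta\phi_i+\phi_i-3W_i^2\phi_i+\beta(W_j^2\phi_i+2W_iW_j\phi_j)\bigr)_{i=1,2}
\end{equation*}
is uniformly invertible on the orthogonal complement of $\{Z_i^k\}$ in $H^1(\Omega_\varepsilon)^2$ with Neumann conditions, or in weighted $L^\infty$ norms. The argument is by contradiction and blow-up: the coupling terms vanish in the limit because of the separation of the bubbles, which decouples the system into two scalar problems, and each of these is handled by the known non-degeneracy of $U$ in the half-space. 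A contraction argument then gives $\phi=(\phi_1,\phi_2)$ solving the projected problem, with $\|\phi\|$ bounded by the size of the error $R=\Delta W-W+W^3-\beta\ldots$. Estimating $\|R\|$ accurately is where the refined profile is needed.

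The reduced problem is to find $(\mu_i,\xi_i)$ that are critical points of $\widetilde J(\mu,\xi)=J(W+\phi)$. We expect the expansion
\begin{equation*}
\widetilde J=2A_0+\sum_{i=1}^2\Bigl(-a_1\frac{H(\xi_i)}{\mu_i}+a_2\frac{\varepsilon^2\log(\mu_i/\varepsilon)}{\mu_i^2}\,\Big|\ \text{suitably scaled}\Bigr)+o(\cdot),
\end{equation*}
with $a_1,a_2>0$ and $H(\xi_i)>0$. This is the scalar expansion of \cite{1999-REY,Wei-Yan}, with interaction terms of strictly lower order. In original variables the balance gives $\mu_i\sim\lambda^{1/2}\log\lambda$-type scaling, determined by $H(\xi_i)$, and the function of $\mu_i$ has a nondegenerate maximum in the scaled variable. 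Because $H$ has strict local maxima at $\xi_i^*$ (possibly degenerate), we use a max--min or degree argument on a small neighbourhood rather than nondegeneracy. We minimize in the $\mu$-variables and maximize in $\xi$, which is a stable critical-point situation under $C^0$-small perturbations. This yields a critical point, hence a solution, for all large $\lambda$. Positivity follows by testing with negative parts, which is standard given the smallness of $\phi$.

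The main obstacle is the 4D error estimate and energy expansion. The logarithmic term means that the naive bubble leaves an error too large for the reduction. Following \cite{1999-REY}, we will build the correction explicitly and check that $\|R\|=o(\text{difference of leading terms})$, including the $C^1$-in-parameters estimates needed for the derivative of the reduced energy with respect to $\mu_i$.
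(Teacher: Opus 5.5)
Your overall scheme (bubble minus Rey's Bessel-type correction for the $\lambda u$ term, projected linear theory by contradiction, contraction, and a $C^0$ expansion $c_0+\frac{1}{\lambda\ln\lambda}\bigl[\sum_i(-c_1H(\xi_i)d_i+c_2d_i^2)+o(1)\bigr]$ with $\delta_i=d_i/(\lambda\ln\lambda)$) is the paper's. The curvature correction and $C^1$ estimates you plan are not needed: with $V=U_{\delta,\xi}-W_{\lambda,\delta,\xi}$ alone one gets $\|\mathcal{E}\|\lesssim\sum_i\delta_i|\ln\delta_i|^{2/3}$, so the energy error is $o(\delta_i)$.

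\textbf{Gap in the linear theory.} The term $\beta W_j^2\phi_i$ does \emph{not} disappear by separation of the peaks. It is a full-size potential concentrated at $\xi_j$ acting on the $i$-th component. In the contradiction argument with $\|\phi_n\|=1$, orthogonality to $K_i$ does not stop $\phi_{i,n}$ from concentrating at $\xi_j$ at scale $\delta_j$. Blowing up there, the limit solves $-\Delta\hat\phi=-\beta U_{0,1}^2\hat\phi$ in $\mathbb{R}^4_+$ with Neumann condition, and the nondegeneracy of $U$ says nothing about this equation. For $\beta=-1$ or $\beta=-3$ it has the solutions $U_{0,1}$, respectively $\partial_\delta U_{0,1}$. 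Correspondingly, $\phi_i\approx U_{\delta_j,\xi_j}$ (resp. $Z_{0,j}$), $\phi_j=0$ is an almost-kernel element lying almost in $K_1^\perp\times K_2^\perp$. Since you never use $\beta>0$, your argument would give uniform invertibility for every $\beta$, which is false. Your remark that the sign of $\beta$ plays no role is correct for the reduced energy, not for the linear theory. This is exactly where the paper uses competitiveness. In the identity for $\|v_{in}\|^2$ the term $-\beta\int_\Omega V_{jn}^2v_{in}^2$ is nonpositive. So only $3\int_\Omega V_{in}^2v_{in}^2$ has to be shown to vanish, and that follows from the blow-up at $\xi_i$ alone.

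\textbf{The final step goes in the wrong direction.} In $d_i$ (equivalently $\mu_i$), the profile $-c_1H(\xi_i)d_i+c_2d_i^2$ has a nondegenerate \emph{minimum} at $d_i=\frac{c_1}{2c_2}H(\xi_i)$, not a maximum. Its minimum value $-\frac{c_1^2}{4c_2}H(\xi_i)^2$ has a strict local \emph{minimum} at a strict local maximum of $H$ with $H>0$. Hence ``minimize in $\mu$, maximize in $\xi$'' selects $\max_\xi\bigl(-\frac{c_1^2}{4c_2}H(\xi)^2\bigr)$ over your neighbourhood. This is in general attained on its boundary and gives no critical point.

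What is true, and what the paper uses, is that $(d_1^*,d_2^*,\xi_1^*,\xi_2^*)$ is a strict local minimum of the whole limit function. Comparing the value at the centre with the values on the boundary of a small compact neighbourhood, the reduced energy has an interior local minimum for $\lambda$ large. By the reduction this gives a solution, using only the $C^0$ expansion and no degree or max--min argument.

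A minor point: in original variables the concentration rate is $\delta_i^{-1}\sim\lambda\ln\lambda$. The rate $\lambda^{1/2}\log\lambda$ is the one in your rescaled variables, where the curvature term must also carry a factor $\varepsilon$.
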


\begin{Rem}\rm 
In an ellipsoid, system \eqref{n-4} always admits blowing-up solutions, as the mean curvature possesses at least two strict local maxima with positive values. As expected, the number of solutions increases with the geometric complexity of the domain. For instance, in a domain with several protrusions (see Figure \ref{figura1}), the mean curvature attains local maxima at the tip of each protrusion. In such cases, if there are $k$ such points, system \eqref{n-4} admits $\frac{k(k-1)}{2}$ blowing-up solutions for $\lambda$ sufficiently large.

\begin{figure}[h]
\centering
\begin{tikzpicture}

\begin{scope}[xshift=-2.5cm] 
    \def\a{1.5}
    \def\b{0.9}

    \shade[ball color=gray!50] (0,0) ellipse (\a cm and \b cm);

    \draw[black, thick] (0,0) ellipse (\a cm and \b cm);
    
    \node[below] at (0,-1.5) {\footnotesize   Ellipsoid};
\end{scope}

\begin{scope}[xshift=2.5cm]
    \shade[ball color=gray!50]
    plot[domain=0:360, samples=300, smooth, variable=\t]
    ({(1 + 0.15*sin(8*\t))*cos(\t)}, {(1 + 0.15*sin(8*\t))*sin(\t)})
    -- cycle;

    \draw[black, thick]
    plot[domain=0:360, samples=300, smooth, variable=\t]
    ({(1 + 0.15*sin(8*\t))*cos(\t)}, {(1 + 0.15*sin(8*\t))*sin(\t)})
    -- cycle;
    
    \node[below] at (0,-1.5) {\footnotesize Domain with protrusions};
\end{scope}

\end{tikzpicture}
\label{figura1}

\end{figure}
\end{Rem}

\begin{Rem}\rm 
The proof relies on a classical Lyapunov--Schmidt reduction procedure and is closer in spirit to \cite{1999-REY} than to higher-dimensional constructions, as a modified ansatz involving Bessel functions must be incorporated from the outset. The main difficulty arises from the lower-dimensional setting ($N=4$), where the standard Talenti bubble is no longer adequate. To overcome this, we employ a modified ansatz that accounts for higher-order terms in the expansion, requiring a delicate analysis of boundary effects and component interactions.
\end{Rem}

\begin{Rem}\rm 
We expect Theorem \ref{xthmy} to hold for more general stable critical points of $H$ (e.g., non-degenerate ones). However, this remains an open problem even for the scalar equation \eqref{single-equ}. From a technical standpoint, extending the result would require a $C^1$-expansion of the reduced energy defined in \eqref{e-f-11}, which likely necessitates a further refinement of the ansatz.
\end{Rem}

\begin{Rem}\rm 
Our results extend to systems with $d \ge 2$ components:
$$-\Delta u_i + \lambda u_i = u_i^{3} -\sum_{j\neq i}^d \beta_{ij} u_i u_j^2 \quad \text{in } \Omega, \quad \frac{\partial u_i}{\partial \nu} = 0 \quad \text{on } \partial \Omega.$$
If the mean curvature has $d$ distinct strict local maxima with positive values, there exists a solution where each component $u_i$ blows up at a corresponding maximum in a fully competitive regime ($\beta_{ij} > 0$).
\end{Rem}

\begin{Rem}\rm 
It is an interesting open question to study the asymptotic behavior of the least energy solutions found in \cite[Theorem 1.5]{arxiv-2025} as $\lambda \to \infty$. Specifically, it remains to be determined whether the components blow up, and if so, whether their concentration points collapse or remain distinct.
\end{Rem}

\medskip
The paper is organized as follows: Section 2 presents the functional setting and the ansatz. Section 3 is devoted to the proof of Theorem \ref{xthmy}. Technical estimates for the finite-dimensional reduction are collected in Appendices B and C. Appendix A contains the main properties of the correction of the ansatz.

\section{Preliminaries}\label{two}
\subsection{The ansatz}

We want to build  the solution to \eqref{n-4} of the form $$u_i= V_{\lambda,\delta_i,\xi_i}+\psi_i,\quad i=1,2$$ 
 where the main term is $$V_{\lambda,\delta_i,\xi_i}:=U_{\delta_i,\xi_i}-W_{\lambda,\delta_i,\xi_i}.
$$
Here $U_{\delta_i,\xi_i}$ is the {\em bubble}
$$
       U_{ {\delta_i}, {\xi_i}}(x)=\alpha\,\frac{{ {\delta_i}}}{ {\delta_i}^2+\left|x- {\xi_i}\right|^2},\ \alpha:=2\sqrt2$$
       which solves the critical problem
       $$-\Delta U_{  {\delta_i}, {\xi_i}}= U_{ {\delta_i},{\xi_i}}^{3}\ \hbox{in}\ \mathbb R^4.
$$
The correction term $W_{\lambda,\delta_i,\xi_i}$ has been introduced by Rey in \cite{1999-REY}. It is defined as
\begin{align}\label{w-delta}
    W_{\lambda,\delta_i,\xi_i}(x):=\alpha\,\lambda\delta_i W\left(\sqrt{\lambda}|x-\xi_i|\right),\quad \text{$i=1,2,$}
\end{align}
where $W$ is the radial  solution of
\begin{align*}
   -\Delta W+W=\frac{1}{|x|^{2}},\quad\text{in $\R^4$.}
\end{align*}  
The properties of $W$ are listed in Appendix \ref{appA}. 

\medskip
It is useful to point out that our ansatz  $V_{\lambda,\delta_i,\xi_i}$ solves
\begin{align}
\begin{cases}
    \label{a-d}-\Delta V_{\lambda,\delta_i,\xi_i}+\lambda V_{\lambda,\delta_i,\xi_i}=U^{3}_{ {\delta_i}, {\xi_i}}+\lambda\left(U_{ {\delta_i}, {\xi_i}}-\frac{\alpha\,\delta_i}{|x-\xi_i|^{2}}\right),\quad &\text{ in $\Omega$,}\\
\partial_{\nu}V_{\lambda,\delta_i,\xi_i}=\partial_{\nu} U_{ {\delta_i}, {\xi_i}}-\partial_{\nu}W_{\lambda,\delta_i,\xi_i},&\text{ on $\partial\Omega$.}
\end{cases}
\end{align} 
The two concentration points $\xi_i\in\partial\Omega$ are different and the concentration parameters $\delta_i$ are 
\begin{equation*}
\delta_i:=\frac{d_i}{\lambda\ln\lambda},\ \hbox{with}\ d_i>0.
\end{equation*}
In the following we will always choose
$ \bm{d}:=(d_1,d_2)$ and $\bm{\xi}:=(\xi_1,\xi_2)$ in the set
$$\mathcal{O}_\eta= \left\{\left(\bm{d}, \bm{\xi}\right) \in\left(\mathbb{R}^{+}\right)^2 \times (\partial\Omega)^2:\,\left|\xi_{1}-\xi_{2}\right|\geq 2\eta,\,\xi_i\in\partial\Omega,\,\eta<d_i<1 / \eta,\quad i=1,2\right\} $$
for some $\eta\in(0,1).$\\

The remainder term $\psi_i\in K_i^\perp$ satisfies some standard orthogonality conditions which will be made clear below.
Let us introduce the functions
    \begin{align}
        \label{ker-element}
        Z_{0,i}:=\delta_i\frac{\partial U_{\delta_i,\xi_i}}{\partial \delta_i},\quad Z_{j,i}:=\delta_i\frac{\partial U_{\delta_i,\xi_i}}{\partial t_{j,i}},\qquad  \text{$j=1,2,3$ and $i=1,2$,}
    \end{align}
where $t_{j,i}$ represents the orthonormal system of coordinates on the tangent space to $\partial\Omega$ at $\xi_i$. We set
$$
    K_i:={\rm span}\left\{Z_{0,i},\, Z_{j,i},\, j=1,2,3\right\},\  K_i^{{\perp}}:=\left\{\psi\in H^1(\Omega)\Big|\,\left\langle \psi, Z_{j,i}\right\rangle  =0,\, j=0,1,2,3\right\}.
$$
where 
\begin{equation}
    \label{inner-pro}\langle u_1, u_2\rangle  =\displaystyle\int_{ \Omega} \nabla u_1 \cdot \nabla u_2+\lambda\displaystyle\int_{ \Omega}u_1u_2.
\end{equation}

\subsection{Setting of the problem}
Given $\lambda>0,$ let $H^1 (\Omega)=H^1_\lambda (\Omega)$ (in the following we will omit the dependance on $\lambda$) be the Hilbert space  equipped with  the inner product defined in \eqref{inner-pro} and the induced  norm $\|u\|^2=\langle u, u\rangle  .$
For any $s \in[1,+\infty)$, the space $L^s(\Omega)$ is equipped with the standard norm
$$
\|u\|_{s}=\left(\displaystyle\int_{\Omega}|u|^s\right)^{\frac{1}{s}}.
$$
It is useful to introduce the adjoint operator of the embedding $
i: H^1( \Omega)\hookrightarrow L^4(\Omega)$, i.e.  the operator
$i_\lambda^{*}:L^{\frac{4}{3}}(\Omega)\to H^{1}(\Omega)$ defined as
    $$\begin{aligned}
i_\lambda^*(f)=u &\quad \Longleftrightarrow \quad {\left\langle u, \varphi\right\rangle}=\displaystyle\int_{\Omega} f(x)\, \varphi(x) d x, \quad \forall \varphi \in {H}^1\left(\Omega\right)\\ &\quad \Longleftrightarrow \quad
u\in H^{1}(\Omega)\ \hbox{solves}\ 
-\Delta u+\lambda u= f\ \hbox{in}\ \Omega,\ 
\partial_{\nu}u=0\ \hbox{on}\ \partial\Omega.\end{aligned}
$$
 It is immediate to check that
 \begin{align*}
    { \left\|i_\lambda^{*}(f)\right\|\leq c \left\|f\right\|_{{{4}/{3}}},\quad \forall f\in L^{\frac{4}{3}}(\Omega),}
\end{align*}
where the constant $c>0$ does not depend on $\lambda$.
 \\
 
 Now, using $i_\lambda^{*}$, we can rewrite \eqref{n-4} as
\begin{equation} \label{eq-4}
    \begin{cases}
u_1=i_\lambda^{*}\left(u_1^3-\beta u_1u_2^2\right),\quad&\text{ in $\Omega$, }\\
u_2=i_\lambda^{*}\left(u_2^3-\beta u_2u_1^2\right),\quad&\text{ in $\Omega$. }
    \end{cases}
\end{equation}

To proceed with the reduction, we split the space { $H:=H^1( \Omega)\times H^1( \Omega)$ equipped with the norm } 
{ $$\left\|(u,v)\right\|_{H}=\|u\|+\|v\|$$}
into the sum of $\bm{K}_{\bm{d},\bm{\xi}}:={K_1}\times K_2$ and $\bm{K^{\perp}}_{\bm{d},\bm{\xi}}:=\left({K_1}\times K_2\right)^{\perp}={K^{\perp}_1}\times K^{\perp}_2$.
Furthermore, 
we introduce the projection maps $${\bm{\Pi}_{\bm{d},\bm{\xi}}}:={\Pi_1}\times {\Pi_2}:H \mapsto \bm{K}_{\bm{d},\bm{\xi}}\ \hbox{and}\
 \bm{\Pi^{\perp}}_{\bm{d},\bm{\xi}}:={\Pi^{\perp}_1}\times {\Pi^{\perp}_2}:H\mapsto \bm{K^{\perp}}_{\bm{d},\bm{\xi}}.$$

Our approach to solve the problem \eqref{eq-4} will be to find, for suitable $\left(\bm{d}, \bm{\xi}\right) \in \mathcal{O}_\eta$ and for large $\lambda>0$, a function $\bm{\psi}:=\left(\psi_1,\psi_{2}\right) \in \bm{K^{\perp}}_{\bm{d},\bm{\xi}}$ such that 
 \begin{align}
    \label{vec-1} \Pi^{\perp}_{\bm{d},\bm{\xi}}\left(\bm{\mathcal{L}}(\bm{\psi})-\bm{\mathcal{E}}-\bm{\mathcal{N}}(\bm{\psi})\right)&=0\\
    \label{vec-2}\Pi_{\bm{d},\bm{\xi}}\left(\bm{\mathcal{L}}(\bm{\psi})-\bm{\mathcal{E}}-\bm{\mathcal{N}}(\bm{\psi})\right)&=0.
\end{align}
Here the linear operator $\bm{\mathcal{L}}=\left(\mathcal{L}_1,\mathcal{L}_2\right)$ is defined as
$$ \mathcal{L}_i(\bm{\psi}):=  \psi_i-i_\lambda^{*}\left(3V_{\lambda,\delta_i,\xi_i}^2\psi_i-\beta\left(V_{\lambda,\delta_j,\xi_j}^2\psi_i+2V_{\lambda,\delta_i,\xi_i}V_{\lambda,\delta_j,\xi_j}\psi_j\right)\right),\ i=1,2,\ j\not =i,$$
the error term 
 $\bm{\mathcal{E}}=\left(\mathcal{E}_1,\mathcal{E}_2\right)$ is defined as
 $$\mathcal{E}_i:= i_\lambda^{*}\left(V_{\lambda,\delta_i,\xi_i}^3-\beta V_{\lambda,\delta_i,\xi_i}V_{\lambda,\delta_j,\xi_j}^2\right)-V_{\lambda,\delta_i,\xi_i},\ i=1,2,\ j\not =i$$
 and the non-linear term
 $\bm{\mathcal{N}}=\left(\mathcal{N}_1,\mathcal{N}_2\right)$ is defined as
$$\mathcal{N}_i(\bm{\psi}):={i_\lambda^{*}\Big(3V_{\lambda,\delta_i,\xi_i} \psi_i^2+ \psi_i^3-\beta\left(V_{\lambda,\delta_i,\xi_i}\psi_j^2+2V_{\lambda,\delta_j,\xi_j}\psi_i\psi_j+\psi_i\psi_j^2\right)\Big)},\ i=1,2,\ j\not =i.$$

Finally, it is useful to introduce some notations for the mean-curvature function. For any $\xi \in \partial \Omega$, by the translation and rotation, we can assume that $\xi=0$. Then for any $\eta>0$ small enough, we denote 
\begin{align*}
  \Omega \cap B_\eta(0) =\left\{\, x = (x',x_{4}) \in \mathbb{R}^{3} \times \mathbb{R} \; : \; x \in B_\eta(0), \; x_{4} >g \left( x'\right)\,\right\},  
\end{align*}
with  
\begin{align*}
  g  \left( x' \right)= \sum_{i=1}^{3} g_{i} \,x^{2}_{i} + \sum_{1 \leq i \leq j \leq \ell \leq 3} g_{ij\ell}\, x_{i}\,x_{j} \,x_{\ell}
+ O\!\left(| x'|^{4}\right).  
\end{align*}
Moreover,  let $H(\cdot)$ be the mean curvature of the boundary, we have  
\begin{align*}
    H(0) = \frac{2}{3} \sum_{i=1}^{3} g_{i}, 
\qquad 
H'(0) = \frac{2}{3} \left( \sum_{j=1}^{i-1} g_{jji} + \sum_{j=i+1}^{N-1} g_{ijj} + 3 g_{iii} \right).
\end{align*}

\section{Proof of the Theorem (\ref{xthmy})}\label{three}

The  first step in the  Lyapunov-Schmidt procedure consists in solving problem \eqref{vec-1}, in terms of $\left(\bm{d}, \bm{\xi}\right)$,
as stated in the following Proposition whose proof is postponed in Appendix \ref{appB}.

\begin{Prop}\label{error-size}
Given any $\eta\in(0,1)$ and $b>0$, there exists $c>0$ and $\lambda_0>0$ such that for any $\lambda \in (\lambda_0,+\infty)$, for any $\left(\bm{d}, \bm{\xi}\right)\in \mathcal{O}_\eta$ and for any $\beta\in (0,b)$, there exists a unique $\bm{\psi}_{\bm{d},\bm{\xi}} \in \bm{K^{\perp}}_{\bm{d},\bm{\xi}}$ solving \eqref{vec-1} and
    \begin{align}\label{error-size-2}
    \|\bm{\psi}_{\bm{d},\bm{\xi}}\|_{H}\leq c\frac1{\lambda\left(\ln\lambda\right)^\frac13}.
\end{align}
Moreover, the map $ (\bm{d}, \bm{\xi}) \mapsto \bm{\psi}_{\bm{d}, \bm{\xi}}
$
is continuously differentiable.
\end{Prop}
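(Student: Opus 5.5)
We follow the standard three-step scheme of the Lyapunov--Schmidt reduction: invert the linearized operator on $\bm{K^{\perp}}_{\bm{d},\bm{\xi}}$, estimate the error, and close with a contraction argument.

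\textbf{Step 1: invertibility of the linearized operator.} We show there exist $c>0$ and $\lambda_0$ such that
\begin{equation*}
\|\bm{\Pi^{\perp}}_{\bm{d},\bm{\xi}}\bm{\mathcal{L}}(\bm{\phi})\|_H\geq c\|\bm{\phi}\|_H
\end{equation*}
for all $\bm{\phi}\in \bm{K^{\perp}}_{\bm{d},\bm{\xi}}$, uniformly in $(\bm{d},\bm{\xi})\in\mathcal{O}_\eta$ and $\beta\in(0,b)$. We argue by contradiction.
\begin{itemize}
\item Take $\lambda_n\to\infty$, $\|\bm{\phi}_n\|_H=1$ and $\bm{\Pi^{\perp}}\bm{\mathcal{L}}(\bm{\phi}_n)\to 0$.
\item Write $\bm{\mathcal{L}}(\bm{\phi}_n)=\bm{h}_n+\sum c_{j,i}^nZ_{j,i}$. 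Testing against the $Z_{j,i}$ shows $c_{j,i}^n\to0$.
\item Rescale each component around its own point, $\tilde\phi_{i,n}(y)=\delta_i\phi_{i,n}(\delta_iy+\xi_i)$, after flattening $\partial\Omega$. Since $\lambda\delta_i^2=d_i^2/(\lambda(\ln\lambda)^2)\to0$, the term $\lambda\delta_i^2\tilde\phi$ disappears in the limit.
\item The coupling terms $V_j^2\phi_i$ and $V_iV_j\phi_j$ vanish in $L^{4/3}$, because $|\xi_1-\xi_2|\geq2\eta$ and the bubbles are concentrated. Here one uses $\|V_iV_j\|_{L^2}\to0$.
\item The correction $W_{\lambda,\delta_i,\xi_i}$ is small relative to $U$ in $L^4$ near the concentration scale, by the properties of $W$ in Appendix A. Hence $V_i^2$ can be replaced by $U^2$ up to $o(1)$ in $L^2$.
\end{itemize}
The limit $\tilde\phi_i$ then solves $-\Delta\tilde\phi=3U^2\tilde\phi$ in the half space $\mathbb R^4_+$ with Neumann condition. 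By nondegeneracy, after even reflection, and the orthogonality conditions, which survive the limit, we get $\tilde\phi_i=0$. A standard argument then gives $\|\phi_{i,n}\|\to0$: test the equation with $\phi_{i,n}$ and use local $L^4$ vanishing away from the points. This contradicts $\|\bm{\phi}_n\|_H=1$.

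\textbf{Step 2: size of the error.} We estimate $\|\bm{\mathcal{E}}\|_H\lesssim \|V_i^3-\beta V_iV_j^2-(-\Delta+\lambda)V_i\|_{4/3}$ plus the boundary term. By \eqref{a-d}, the interior part is
\begin{equation*}
V_i^3-U_i^3-\lambda\Bigl(U_i-\frac{\alpha\delta_i}{|x-\xi_i|^2}\Bigr)-\beta V_iV_j^2 .
\end{equation*}
The boundary part $\partial_\nu U-\partial_\nu W$ is handled through the dual of the trace embedding. It has size about $\delta_i|\ln\delta_i|$ times the curvature, which is the term producing $H(\xi_i)$.

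We estimate each term separately.
\begin{itemize}
\item $\lambda(U-\alpha\delta/|x|^2)$ equals $-\alpha\lambda\delta^3/(|x|^2(\delta^2+|x|^2))$, whose $L^{4/3}$ norm is $O(\lambda\delta^2)$ up to a logarithm.
\item $|V^3-U^3|\lesssim U^2W_\lambda+W_\lambda^3$. Using $W(r)\sim -\ln r$ near $0$ and exponential decay at infinity, this gives $\|U^2W_\lambda\|_{4/3}\sim\lambda\delta\cdot\delta\,|\ln(\sqrt\lambda\delta)|^{?}$.
\item The interaction $V_iV_j^2$ is $O(\delta_i\delta_j^{2})$ up to logs, which is much smaller.
\end{itemize}
Collecting terms, the dominant contributions should be $O\bigl(\frac1{\lambda(\ln\lambda)^{1/3}}\bigr)$; the exponent $1/3$ reflects an $L^{4/3}$ norm of a logarithmically growing term.

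\textbf{Step 3: fixed point.} The nonlinear term satisfies
\begin{equation*}
\|\bm{\mathcal{N}}(\bm\psi)\|_H\lesssim\|\bm\psi\|_H^2+\|\bm\psi\|_H^3,
\end{equation*}
with the corresponding Lipschitz bound, by Hölder and Sobolev inequalities uniform in $\lambda$. The map $\bm\psi\mapsto(\bm{\Pi^{\perp}}\bm{\mathcal{L}})^{-1}\bm{\Pi^{\perp}}(\bm{\mathcal{E}}+\bm{\mathcal{N}}(\bm\psi))$ is therefore a contraction on the ball of radius $C/(\lambda(\ln\lambda)^{1/3})$. This yields existence, uniqueness and \eqref{error-size-2}. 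The $C^1$ dependence on $(\bm{d},\bm{\xi})$ follows from the implicit function theorem applied to $F(\bm{d},\bm{\xi},\bm\psi)=0$, together with the smooth dependence of $Z_{j,i}$ and $V_i$ on the parameters.

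\textbf{Main obstacle.} The main difficulty is Step 2: pinning down precisely the $L^{4/3}$ norm of the $W$-correction terms and of the Neumann boundary error with the logarithmic scale $\delta\sim 1/(\lambda\ln\lambda)$. I would first compute these in the flattened coordinates, splitting the domain into $|x|<\delta$, $\delta<|x|<\lambda^{-1/2}$ and $|x|>\lambda^{-1/2}$.
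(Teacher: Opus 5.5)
Your three-step scheme (invert the linearized operator, estimate the error, contract) is the paper's, via Lemma \ref{invert}, Lemma \ref{errore} and a contraction. However, two of your steps have genuine gaps.

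\textbf{Invertibility.} Your argument never uses $\beta>0$, and it has to.

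The claim that $V_j^2\phi_i$ vanishes in $L^{4/3}$ is false. The only available bound is $\|V_j^2\phi_i\|_{4/3}\le\|V_{\lambda,\delta_j,\xi_j}\|_4^2\|\phi_i\|_4$, and $\|V_{\lambda,\delta_j,\xi_j}\|_4$ is of order one. What is small is $V_iV_j$ in $L^2$, and $V_j$ pointwise near $\xi_i$. That is enough to identify the weak limit of the blow-up at $\xi_i$, but not for your final step. Testing the $i$-th equation with $\phi_{i,n}$ produces $-\beta\int_\Omega V_j^2\phi_{i,n}^2$, and the blow-up at $\xi_i$ says nothing about $\phi_{i,n}$ at scale $\delta_j$ near $\xi_j$.

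The estimate really depends on the sign of $\beta$. For $\beta=-1$, take $\bm{\phi}=(\Pi_1^\perp V_{\lambda,\delta_2,\xi_2},0)$. Then $\|\bm{\phi}\|_H\sim 1$, yet one checks $\|\bm{\Pi^{\perp}}\bm{\mathcal{L}}(\bm{\phi})\|_H=o(1)$, since $i_\lambda^*(V_{\lambda,\delta_2,\xi_2}^3)\approx V_{\lambda,\delta_2,\xi_2}$ and $\|V_1V_2\|_2\to0$.

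The paper closes this step by discarding $-\beta\int_\Omega V_j^2v_{in}^2\le 0$. Alternatively, you can blow up $\phi_{i,n}$ at $\xi_j$ as well. There the limit solves $-\Delta\tilde\phi=-\beta U_{0,1}^2\tilde\phi$ in $\R^4_+$ with Neumann condition, so it vanishes when $\beta>0$, and then $\int_\Omega V_j^2\phi_{i,n}^2\to0$.

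\textbf{Error size.} Your Step 2 does not produce the rate in \eqref{error-size-2}.

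You give the boundary error the size $\delta_i|\ln\delta_i|\sim 1/\lambda$, which would only give $\|\bm{\psi}\|_H\lesssim 1/\lambda$. You also attribute the exponent $1/3$ to an interior $L^{4/3}$ norm. In fact the interior terms are of lower order:
\begin{itemize}
\item $\|U^2W_\lambda\|_{4/3}\lesssim\lambda\delta_i^2|\ln\delta_i|\sim 1/(\lambda\ln\lambda)$;
\item $\|\lambda(U-\alpha\delta_i|x-\xi_i|^{-2})\|_{4/3}\lesssim\lambda\delta_i^2$;
\item the $UW_\lambda^2$, $W_\lambda^3$ and coupling terms are smaller still.
\end{itemize}

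The dominant contribution is the Neumann datum. It is controlled through the trace embedding $H^1(\Omega)\hookrightarrow L^3(\partial\Omega)$, uniform for $\lambda\ge1$, that is, in $L^{3/2}(\partial\Omega)$. Since $(x-\xi_i)\cdot\nu=O(|x-\xi_i|^2)$ on $\partial\Omega$, you have $|\partial_\nu U_{\delta_i,\xi_i}|\lesssim\delta_i|x-\xi_i|^2(\delta_i^2+|x-\xi_i|^2)^{-2}$. Hence $\int_{\partial\Omega}|\partial_\nu U_{\delta_i,\xi_i}|^{3/2}\lesssim\delta_i^{3/2}|\ln\delta_i|$. Likewise $\|\partial_\nu W_{\lambda,\delta_i,\xi_i}\|_{L^{3/2}(\partial\Omega)}\lesssim\delta_i(\ln\lambda)^{2/3}$, using $|W'(r)|\lesssim r^{-1}$ for $r<1$ and $|W'(r)|\lesssim r^{-3}$ for $r>1$.

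This gives $\|\bm{\mathcal{E}}\|_H\lesssim\sum_i\delta_i|\ln\delta_i|^{2/3}\sim 1/(\lambda(\ln\lambda)^{1/3})$, and this is where the $1/3$ comes from. Note also that $W$ decays like $r^{-2}$, not exponentially; only $K_1$, and hence $V$, decays exponentially.

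With these two repairs, your contraction and implicit-function steps go through as in the paper.
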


  \vspace{0.2cm}
The second step in the Lyapunov-Schmidt procedure consists in solving  problem \eqref{vec-2}, which in virtue of Proposition \ref{error-size} can be rewritten as
$$
    \mathcal{L}_i(\bm{\psi})-\mathcal{E}_i-\mathcal{N}_i(\bm{\psi}) =\sum\limits_{j=0}^{3}{ a_j}\,Z_{j,i},\quad i=1,2,
$$
where $Z_{j,i}$ are given in \eqref{ker-element}. The goal is finding suitable $(\bm{d}, \bm{\xi})$ such that  $a_j=0$  for any $j=0,1,2,3$. 

Let us introduce the   energy functional $E : H\to \mathbb{R}$ defined as
\begin{align}\label{ef}
E(u_1, u_2)
=&\frac{1}{2}\displaystyle\int_{\Omega} \left(|\nabla u_1|^2+|\nabla u_2|^2 \right)
+ \frac{\lambda}{2}\displaystyle\int_{\Omega} \left(u_1^2+u_2^2\right)
- \frac{1}{4} \displaystyle\int_{\Omega}\left(u_1^4+u_2^4\right)- \frac{\beta}{2} 
\int_{\Omega} u_1^2 u_2^2,
\end{align} 
whose critical points are solutions to the system \eqref{n-4}.
We also introduce the {\em reduced energy}  $\widetilde{E}:(0,\infty)^2\times \mathcal D\to \mathbb R$ as
\begin{align}\label{e-f-11}
    \widetilde{E}{(\bm{d},\bm{\xi})}:=E\left(V_{\lambda,\delta_1,\xi_1}+\psi_{1 },V_{\lambda,\delta_2,\xi_2}+\psi_{2}\right),
\end{align}
where
$ \mathcal D:=(\partial\Omega\times\partial\Omega)\setminus\{\xi_1=\xi_2\}$.

\begin{Prop}\label{energy-esti}${}$\\
\begin{itemize}
\item[(i)] First of all,
 $\left(V_{\lambda,\delta_1,\xi_1}+\psi_{1 },V_{\lambda,\delta_2,\xi_2}+\psi_{2 }\right)$
is a critical point of  $E$ if and only if
\((\bm{d},\bm{\xi})\) is a critical point of  the reduced energy $\widetilde{E} $.
\item[(ii)]
Next, the following estimate holds true:
$$\widetilde E(\bm{d},\bm{\xi})=c_0+\frac{1}{\lambda\ln\lambda}\left[\sum_{i=1}^2  \left(-c_{1 }H(\xi_i)d_i+c_{2 }d_i^2\right)+ o(1)\right]$$
uniformly with respect to $(\bm{d},\bm{\xi})$ in compact sets of
$(0,\infty)^2\times \mathcal D$ and $\beta$ in compact sets of $(-\infty,0),$
where the $c_i$'s are   positive constants.
\end{itemize}
\end{Prop}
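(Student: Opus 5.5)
Part (i) is the standard equivalence of Lyapunov--Schmidt reduction, and we prove it exactly as in the scalar case. Set $u_i=V_{\lambda,\delta_i,\xi_i}+\psi_i$ with $\bm{\psi}=\bm{\psi}_{\bm{d},\bm{\xi}}$ given by Proposition \ref{error-size}. Then
$$E'(u_1,u_2)=\Big(\sum_{j=0}^3 a_{j,1}Z_{j,1},\ \sum_{j=0}^3 a_{j,2}Z_{j,2}\Big)$$
in the inner product \eqref{inner-pro}. The map $(\bm{d},\bm{\xi})\mapsto\bm{\psi}_{\bm{d},\bm{\xi}}$ is $C^1$, so the chain rule gives
$$\partial_{d_i}\widetilde E=\langle E'(u), \partial_{d_i}V_i+\partial_{d_i}\psi_i\rangle,$$
and similarly for the tangential derivatives in $\xi_i$. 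We have $\partial_{d_i}V_{\lambda,\delta_i,\xi_i}=d_i^{-1}Z_{0,i}+o(1)$ and $\partial_{t_{j,i}}V=\delta_i^{-1}Z_{j,i}+\dots$. Differentiating $\langle\psi_i,Z_{j,i}\rangle=0$ gives
$$\langle \partial\psi_i,Z_{j,i}\rangle=-\langle\psi_i,\partial Z_{j,i}\rangle=O(\|\psi_i\|\,\|\partial Z_{j,i}\|),$$
which is small relative to $\langle Z,Z\rangle$ after the natural scaling. So $\nabla\widetilde E=0$ becomes a linear system $M\bm a=0$ whose matrix $M$ is a small perturbation of the diagonal, invertible Gram matrix $\langle Z_{j,i},Z_{l,k}\rangle$. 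This matrix is almost diagonal because the $Z$'s for distinct $i$ concentrate at distinct points and the $Z$'s for fixed $i$ are almost orthogonal. Hence $\bm a=0$, and the converse implication is immediate.

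For part (ii) we first reduce to the energy of the ansatz. Since $\bm\psi$ solves \eqref{vec-1}, expanding $E$ around $\bm V=(V_1,V_2)$ gives
$$\widetilde E=E(\bm V)+\langle E'(\bm V),\bm\psi\rangle+O(\|\bm\psi\|^2).$$
Here $|\langle E'(\bm V),\bm\psi\rangle|\le\|\bm{\mathcal E}\|\,\|\bm\psi\|$, and by Proposition \ref{error-size} both terms are $O\big((\lambda\ln\lambda)^{-1}(\ln\lambda)^{-2/3}\cdot\ldots\big)=o\big(1/(\lambda\ln\lambda)\big)$. This uses the bound $\|\bm{\mathcal E}\|\lesssim \lambda^{-1}(\ln\lambda)^{-1/3}$ established in the proof of that proposition. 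We then split
$$E(\bm V)=J(V_1)+J(V_2)-\frac\beta2\int_\Omega V_1^2V_2^2,$$
where $J$ is the scalar Neumann functional. Since $|\xi_1-\xi_2|\ge2\eta$, the interaction term is bounded by
$$\int_{B_\eta(\xi_1)}V_1^2V_2^2+\int_{B_\eta(\xi_2)}V_1^2V_2^2+\dots\lesssim \delta_1^2\delta_2^2|\ln\delta|+\ldots=O\big((\lambda\ln\lambda)^{-2}\ln\lambda\big),$$
which is negligible. Note that the sign of $\beta$ plays no role at this order; the statement's "$(-\infty,0)$" should presumably read $(0,\infty)$, but the estimate holds uniformly for $\beta$ in any compact set.

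It therefore remains to compute $J(V_{\lambda,\delta,\xi})$ at a single boundary point, which is Rey's computation in \cite{1999-REY}. This expansion is the main obstacle. We use the boundary flattening $x_4>g(x')$ and the scaling $x=\xi+\delta y$. The Dirichlet plus $L^4$ part of $U$ over the half-space gives $c_0/2$, and the curvature correction $-c_1H(\xi)\delta$ comes from the term $\sum g_ix_i^2$ of $g$ integrated against $|\nabla U|^2-\frac12U^4$. The $\lambda$-terms need care because $\lambda\int_\Omega U_\delta^2\sim\lambda\delta^2|\ln\delta|$ diverges logarithmically in dimension $4$. This is precisely why $W$ is introduced. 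Testing \eqref{a-d} against $V$ and using the asymptotics of $W$ from Appendix \ref{appA} (namely $W(r)\sim -\ln r$ near $0$, with exponential decay at infinity), the combination $\lambda\int(U-\alpha\delta|x-\xi|^{-2})V$ together with the cross terms involving $W$ produces a finite contribution. That contribution is of order $\lambda\delta^2\ln(1/(\sqrt\lambda\,\delta))\approx c_2\lambda\delta^2\ln\lambda$, up to constants.

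With $\delta=d/(\lambda\ln\lambda)$, both $\delta H$ and $\lambda\delta^2\ln\lambda$ are of order $d/(\lambda\ln\lambda)$ and $d^2/(\lambda\ln\lambda)$ respectively. The remaining errors — the terms with $g_{ij\ell}$, the $O(\delta^2|\ln\delta|)$ pieces without $\lambda$, and the terms coming from $W^2$ and $U^3W$ — all carry an extra factor that vanishes, such as $1/\ln\lambda$ or $\ln\ln\lambda/\ln\lambda$. The delicate bookkeeping lies in isolating exactly the $\lambda\delta^2\ln\lambda$ coefficient, showing it is positive, and showing the $\ln\ln\lambda$ pieces are $o(1)$ after division by $\ln\lambda$. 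Summing over $i=1,2$ then gives the stated expansion, uniformly on compact sets.
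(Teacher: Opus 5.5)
Your part (i) and the first reductions in (ii) are fine and match the paper. These reductions are expanding $E$ around $\bm V$ and discarding $\langle E'(\bm V),\bm\psi\rangle+O(\|\bm\psi\|_H^2)$ and the coupling integral. In (i), using $\langle\partial\psi_i,Z_{j,i}\rangle=-\langle\psi_i,\partial Z_{j,i}\rangle$ is cleaner than the paper, which relies on bounds for $\|\partial\bm\psi_{\bm d,\bm\xi}\|$ that it states without proof. You are also right that $(-\infty,0)$ is a misprint.

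The gap is in the decisive step of (ii), the term $c_2d_i^2$. You assert its order but never isolate its coefficient or sign, and the bookkeeping you sketch would not produce it. Write $U=U_{\delta,\xi}$, $V=V_{\lambda,\delta,\xi}$, test \eqref{a-d} against $V$, and expand $V^4=(U-W_{\lambda,\delta,\xi})^4$. For the single-bubble functional (your $J$) this gives
\begin{align*}
I(V)=\frac14\int_\Omega U^4+\frac12\int_{\partial\Omega}\partial_\nu U\,U+\frac12\int_\Omega U^3W_{\lambda,\delta,\xi}+\frac\lambda2\int_\Omega\Big(U-\frac{\alpha\,\delta}{|x-\xi|^2}\Big)V+o(\delta).
\end{align*}
Since $U-\alpha\delta|x-\xi|^{-2}=-\alpha\delta^3|x-\xi|^{-2}(\delta^2+|x-\xi|^2)^{-1}$, rescaling $x=\xi+\delta z$ shows the last integral is $O(\lambda\delta^2)$, with no logarithm. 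The whole $\lambda\delta^2\ln$ contribution sits in the $U^3W$ term. Using $W(r)=\frac12\ln\frac1r+O(1)$ as $r\to0$,
\begin{align*}
\frac12\int_\Omega U^3W_{\lambda,\delta,\xi}=\frac{\alpha}{4}\,\lambda\delta^2\ln\frac{1}{\sqrt\lambda\,\delta}\int_{\R^4_+}U_{0,1}^3\,\big(1+o(1)\big),
\end{align*}
which for $\delta=d/(\lambda\ln\lambda)$ equals $\frac{1}{\lambda\ln\lambda}\big(c_2d^2+o(1)\big)$. Your text attributes the logarithm to the $\lambda$-term ``together with the cross terms involving $W$''. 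It then lists ``the terms coming from $W^2$ and $U^3W$'' among the errors carrying a vanishing factor, and both cannot hold. Read literally, your accounting gives $c_2=0$. The reduced energy would then have no $d_i^2$ term, and the function minimized in the proof of Theorem \ref{xthmy} would have no critical point in $\bm d$.

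To repair this, you have two options.
\begin{itemize}
\item Keep $\frac12\int_\Omega U^3W_{\lambda,\delta,\xi}$ as the main term; it is positive because $W>0$.
\item Use the paper's form of the expansion,
\begin{align*}
I(V)=\frac14\int_\Omega U^4+\frac12\int_{\partial\Omega}\partial_\nu U\,U+\frac\lambda2\int_\Omega VU+o(\delta),
\end{align*}
in which the $U^3W$ contributions have cancelled. The paper computes $\lambda\int_\Omega VU$ by splitting at $|x-\xi|=\sqrt\delta$ and $|x-\xi|=1/\sqrt\lambda$. The inner piece $\lambda\int_{\Omega\cap B_{\sqrt\delta}(\xi)}U^2=\bar a_0\lambda\delta^2|\ln\delta|$ gives $c_2>0$. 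The remaining pieces are $O(\lambda\delta^2\ln|\ln\delta|)$ or smaller, by the asymptotics of $K_1$.
\end{itemize}

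Separately, $W$ does not decay exponentially: $W(r)\sim r^{-2}$ as $r\to\infty$, and it is $r^{-2}-W(r)=K_1(r)/r$ that decays exponentially. Hence $W_{\lambda,\delta,\xi}$ cancels the tail $\alpha\delta|x-\xi|^{-2}$ of $U$, and it is $V$ that is exponentially small for $|x-\xi|\gg 1/\sqrt\lambda$. This is why the logarithm is $\ln\frac{1}{\sqrt\lambda\,\delta}$ rather than $|\ln\delta|$.
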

\begin{proof}
  The proofs  are postponed in Appendix \ref{appC}.  
\end{proof}

Finally we have all the ingredients to prove Theorem \ref{xthmy}.\\

\noindent{\bf Proof of  Theorem \ref{xthmy}: completed}\\
Assume the function $H$ has two different local strict maximum points  $\xi_1^*$ and $\xi_2^*$ with $H(\xi^*_i)>0.$ Set
$d_i^*=\frac{c_1}{2c_2}H(\xi^*_i)$  for $i=1,2.$ It is easy to check that
$(d_1^*,d_2^*,\xi_1^*,\xi_2^*)$  is a local strict minimum point of 
the function
$$(d_1,d_2,\xi_1,\xi_2)\longrightarrow\sum_{i=1}^2(-c_1H(\xi_i)d_i+c_2d_i^2)$$
which is   stable under small uniform perturbations. Therefore,
the proof of Theorem \ref{xthmy} follows directly by (i) of Propositions \ref{energy-esti}.
 
 \appendix 
\section{Properties of the function $W$} \label{appA}
\setcounter{equation}{0}
\renewcommand{\theequation}{A.\arabic{equation}}
We know 
\begin{align}
    \label{ww}W(r)=\frac{1}{r^{2}}-\frac{K_{1}(r)}{r}
\end{align} 
where $K_1$ is the Bessel function. We state some basic proporties of the functions $K_{1}(r)$ and $W(r)$.
\begin{Prop}\label{bessel}
    The Bessel function $K_{1}(r)$ satisfies
\begin{align}\label{properties-11}
    \begin{cases}
        K_1(r)= \frac{1}{r}+\frac{r}{2}\ln\frac{r}{2}+\frac{r}{2}\left(\gamma-\frac{1}{2}\right)+O\left(r^3|\ln r|\right), &r\to 0.\\
        K_1(r)= \sqrt{\frac{\pi}{2}} \frac{e^{-r}}{r^{{1}/{2}}}+ \frac{3}{8}\sqrt{\frac{\pi}{2}}\frac{e^{-r}}{r^{{3}/{2}}}+O\left(\frac{e^{-r}}{r^{{5}/{2}}}\right), &r\to +\infty.
    \end{cases}
    \end{align}
    Moreover, the derivatives $K^{\prime}_{1}(r)$ and $K^{\prime\prime}_{1}(r)$ verify
    \begin{align*}\label{properties-12}
    \begin{cases}
        K^{\prime}_{1}(r)=- \frac{1}{r^2}+\frac{1}{2}\ln\frac{r}{2}+\frac{1}{2}\left(\gamma+\frac{1}{2}\right)+O\left(r^2|\ln r|\right), &r\to 0.\\
        K^{\prime}_{1}(r)= -\sqrt{\frac{\pi}{2}} \frac{e^{-r}}{r^{{1}/{2}}}-\frac{7}{8}\sqrt{\frac{\pi}{2}} \frac{e^{-r}}{r^{{3}/{2}}}+O\left(\frac{e^{-r}}{r^{{5}/{2}}}\right),&r\to +\infty.
    \end{cases}
    \end{align*}
    and
    $$\label{properties-13}
    \begin{cases}
        K^{\prime\prime}_{1}(r)=\frac{2}{r^3}+\frac{1}{2r}+O\left(r|\ln r|\right), &r\to 0.\\
        K^{\prime\prime}_{1}(r)= \sqrt{\frac{\pi}{2}}  \frac{e^{-r}}{r^{{1}/{2}}}+\frac{11}{8}\sqrt{\frac{\pi}{2}}  \frac{e^{-r}}{r^{{3}/{2}}}+O\left(\frac{e^{-r}}{r^{{5}/{2}}}\right),&r\to +\infty.
    \end{cases}
  $$
In particular, 
$$\label{properties-14}
    \begin{cases}
        r\,K^{\prime}_{1}(r)-K_{1}(r)=-\frac{2}{r}+O\left(r\right), &r\to 0.\\
        r\,K^{\prime}_{1}(r)-K_{1}(r)=  -\sqrt{\frac{\pi}{2}}r^{\frac{1}{2}}\,{e^{-r}}+O\left( \frac{e^{-r}}{r^{{1}/{2}}}\right),&r\to +\infty.
    \end{cases}
 $$
\end{Prop}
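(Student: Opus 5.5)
The plan is to derive all the stated expansions from two standard facts about the modified Bessel functions: their convergent power series near $0$, and their asymptotic expansions at $+\infty$ with controlled remainders. Everything else then follows by term-wise differentiation, by the recurrence
$$K_1'(r)=-K_0(r)-\frac{K_1(r)}{r},$$
and by the modified Bessel equation
$$r^2K_1''+rK_1'-(r^2+1)K_1=0.$$
No property of $W$ is needed; the statement is purely about $K_1$.

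\textbf{Small $r$.} I would start from the classical series
$$K_1(r)=\frac1r+\frac{r}{2}\ln\frac r2+\frac r2\Big(\gamma-\frac12\Big)+\sum_{k\ge1}\frac{(r/2)^{2k+1}}{k!(k+1)!}\Big(\ln\frac r2+c_k\Big).$$
Here $c_k$ are explicit constants built from harmonic numbers and $\gamma$, and the tail is $O(r^3|\ln r|)$; this gives the first line. Since the series converges, it can be differentiated term by term, and each derivative of the tail loses one power of $r$ while keeping at most one logarithm. For $K_1'$, note that $\frac{d}{dr}\big(\frac r2\ln\frac r2\big)=\frac12\ln\frac r2+\frac12$. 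Adding the constant $\frac12(\gamma-\frac12)$ gives $\frac12(\gamma+\frac12)$, with remainder $O(r^2|\ln r|)$. Differentiating once more gives
$$K_1''=\frac{2}{r^3}+\frac1{2r}+O(r|\ln r|).$$
For the combination $rK_1'-K_1$, I substitute the two expansions directly. The terms $\mp\frac r2\ln\frac r2$ cancel, and the constants combine to $\frac r2(\gamma+\frac12)-\frac r2(\gamma-\frac12)=\frac r2$. The remainders, $r\cdot O(r^2|\ln r|)$ and $O(r^3|\ln r|)$, are $o(r)$. Hence
$$rK_1'-K_1=-\frac2r+\frac r2+o(r)=-\frac2r+O(r),$$
which is the claimed bound, with no logarithmic loss.

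\textbf{Large $r$.} I would use the Hankel expansions
$$K_\nu(r)=\sqrt{\frac{\pi}{2r}}\,e^{-r}\Big(1+\frac{4\nu^2-1}{8r}+O(r^{-2})\Big).$$
The remainder is justified, for instance, from the integral representation $K_\nu(r)=\int_0^\infty e^{-r\cosh t}\cosh(\nu t)\,dt$ by Laplace's method, or from Watson's lemma. With $\nu=1$ this is the second line of \eqref{properties-11}, and with $\nu=0$ it gives $K_0=\sqrt{\frac\pi2}\,e^{-r}r^{-1/2}\big(1-\frac1{8r}+O(r^{-2})\big)$. Setting $s:=\sqrt{\pi/2}$, the recurrence then yields
$$K_1'=-s\,e^{-r}r^{-1/2}\Big(1-\frac1{8r}+\frac1r\Big)+O\big(e^{-r}r^{-5/2}\big),$$
that is, the coefficient $\frac78$. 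Next I solve the Bessel equation for $K_1''$:
$$K_1''=-\frac{K_1'}{r}+\Big(1+\frac1{r^2}\Big)K_1=s\,e^{-r}r^{-1/2}\Big(1+\frac1r+\frac3{8r}\Big)+O\big(e^{-r}r^{-5/2}\big),$$
which produces the coefficient $\frac{11}8$. Finally,
$$rK_1'-K_1=-s\,r^{1/2}e^{-r}-\frac78\,s\,r^{-1/2}e^{-r}-s\,r^{-1/2}e^{-r}+O\big(e^{-r}r^{-3/2}\big),$$
which is $-s\,r^{1/2}e^{-r}+O(e^{-r}r^{-1/2})$, as claimed.

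The main (mild) obstacle is making the remainders rigorous rather than formal. At $0$ this is handled by convergence of the series. At $\infty$ it requires a Hankel expansion with an explicit $O(r^{-2})$ error, and I would quote it from a standard reference such as Watson or Abramowitz--Stegun rather than rederive it. Using the recurrence and the ODE for the derivatives, instead of differentiating asymptotic series, avoids any issue with differentiating asymptotic expansions.
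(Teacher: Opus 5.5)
Your proposal is correct, and all the coefficients check out: the constants $\frac r2(\gamma\mp\frac12)$ at the origin, and the coefficients $\frac38$, $\frac78$, $\frac{11}{8}$ at infinity obtained via $K_1'=-K_0-K_1/r$ and the Bessel equation. The paper gives no proof of this proposition; it records these expansions as classical facts about $K_1$ and points to Rey's work. Your argument (convergent series at $0$ differentiated termwise, Hankel expansion with $O(r^{-2})$ remainder at $\infty$, recurrence and ODE for the derivatives) is the standard justification the paper implicitly relies on, and it even gives the slightly sharper $rK_1'-K_1=-\frac2r+\frac r2+O(r^3|\ln r|)$ near the origin.
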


 \begin{Rem}\label{bessel-2}
  Combining the definition of $W$ given in \eqref{ww} with the above estimates, we derive the following prior estimates 
    \begin{equation}\label{properties-1}
    \begin{cases}
        W(r)\lesssim |\ln{r}|, &r\to 0.\\
        W(r)\lesssim \frac{1}{r^{2}},&r\to +\infty.
    \end{cases}
    \end{equation}
  Moreover, the estimates of 
  \begin{align*}
      \label{w-prime}W^{\prime}(r)= -\frac{2}{r^3}-\frac{K_1^{\prime}(r)}{r}+\frac{K_1(r)}{r^2}
  \end{align*}  are \begin{equation}\label{properties-2}
    \begin{cases}
        W^{\prime}(r)\lesssim \frac{1}{r}, &r\to 0.\\
        W^{\prime}(r)\lesssim \frac{1}{r^{3}},&r\to +\infty,
    \end{cases}
    \end{equation}
similarly the estimates of  $W^{\prime\prime}(r)$ are \begin{equation}\label{properties-3}
    \begin{cases}
        W^{\prime\prime}(r)\lesssim \frac{1}{r^2}, &r\to 0.\\
        W^{\prime\prime}(r)\lesssim \frac{1}{r^{4}},&r\to +\infty.
    \end{cases}
    \end{equation}
 In addition, by the the definition of $W_{\lambda,\delta_i,\xi_i}$ given in \eqref{w-delta}, we have
\begin{align}\label{ww-1} 
        W_{\lambda,\delta_i,\xi_i}(x)= \alpha\left(\frac{\delta_i}{|x-\xi_i|^2}-\frac{\sqrt{\lambda}\delta_i\, K_{1}(\sqrt{\lambda}|x-\xi_i|)}{|x-\xi_i|}\right),
    \end{align}
    then we can use the above results stated in Proposition \ref{bessel} to infer the properties of 
 $W_{\lambda,\delta_i,\xi_i}$.  More details about the Bessel function $K_{\frac{N-2}{2}}(r)$ can be found in \cite{1999-REY}. 
    \end{Rem}

\section{Solving problem (\ref{vec-1})} \label{appB}
\setcounter{equation}{0}
\renewcommand{\theequation}{B.\arabic{equation}}
First of all, we study the linear theory associated with problem  \eqref{vec-1}.
 {\begin{Lem}\label{invert}
 Given any $\eta\in(0,1)$ and $b>0$, there exists $c>0$ and $\lambda_0>0$ such that for any $\lambda \in (\lambda_0,+\infty)$, for any $\left(\bm{d}, \bm{\xi}\right)\in \mathcal{O}_\eta$ and for any $\beta\in (0,b)$ 
    \begin{align*}
        \left\|\left(\Pi^{\perp}_{\bm{d},\bm{\xi}}\circ\bm{\mathcal{L}}_{\bm{d},\bm{\xi}}\right)(\bm{\psi})\right\|_H\geq c\left\|\bm{\psi}\right\|_H, \quad \forall \bm{\psi}\in \bm{K^{\perp}}_{\bm{d},\bm{\xi}}.   \end{align*} 

\end{Lem}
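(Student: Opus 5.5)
We argue by contradiction, following the standard blow-up argument for linearized operators in Lyapunov--Schmidt reductions (as in \cite{1999-REY,Pistoia2017}). Suppose there are sequences $\lambda_n\to\infty$, $(\bm d_n,\bm\xi_n)\in\mathcal O_\eta$, $\beta_n\in(0,b)$ and $\bm\psi_n=(\psi_{1,n},\psi_{2,n})\in\bm{K^\perp}_{\bm d_n,\bm\xi_n}$ with $\|\bm\psi_n\|_H=1$ and $\|\Pi^\perp\bm{\mathcal L}(\bm\psi_n)\|_H\to0$. Write $\bm{\mathcal L}(\bm\psi_n)=\bm h_n+\bm w_n$ with $\bm h_n\to0$ in $H$ and $w_{i,n}=\sum_{j}a^n_{j,i}Z_{j,i}\in K_i$.

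\emph{Step 1: the coefficients $a^n_{j,i}$ vanish.} Test the $i$-th equation against $Z_{l,i}$. The matrix $\langle Z_{j,i},Z_{l,i}\rangle$ is, up to $o(1)$ errors coming from the boundary and the $\lambda\int Z^2$ term (which is $O(\lambda\delta_i^2|\ln\delta_i|)=o(1)$), diagonal with positive entries, because $\xi_i\in\partial\Omega$ cuts the bubble by a half space. Since $\langle\psi_{i,n},Z_{l,i}\rangle=0$, it remains to show
\[
\int_\Omega\Big(3V_{i}^2\psi_{i}-\beta(V_j^2\psi_i+2V_iV_j\psi_j)\Big)Z_{l,i}=o(1).
\]
For the term $3V_i^2\psi_iZ_{l,i}$ we use $V_i=U_i-W_i$, the bounds on $W_{\lambda,\delta_i,\xi_i}$ from Remark \ref{bessel-2}, and the fact that $Z_{l,i}$ solves the linearized equation $-\Delta Z=3U^2Z$ in $\mathbb R^4$. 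Integrating by parts, the remaining boundary and $\lambda$-terms are $o(1)$; the boundary terms are controlled using $\partial_\nu U$, which is $O(\delta_i)$-small on $\partial\Omega$ near $\xi_i$ thanks to the local graph expansion $g$. For the interaction terms $V_jZ_{l,i}$ with $j\neq i$, the separation $|\xi_1-\xi_2|\ge2\eta$ gives $\|V_jZ_{l,i}\|_{2}\to0$, and we conclude with Hölder. Consequently $\|\bm w_n\|_H\to0$.

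\emph{Step 2: blow-up and limit.} Rescale around $\xi_{i,n}$ by setting $\tilde\psi_{i,n}(y)=\delta_{i,n}\psi_{i,n}(\delta_{i,n}y+\xi_{i,n})$, after flattening the boundary. These functions are bounded in $D^{1,2}$ of the expanding domains, which converge to the half space $\mathbb R^4_+$. Up to a subsequence, $\tilde\psi_{i,n}\rightharpoonup\tilde\psi_i$. The mass term satisfies $\lambda\delta^2\to0$, so it disappears in the limit. The coupling terms vanish too, since after rescaling at $\xi_i$ the function $\delta_iV_j(\delta_i y+\xi_i)\to0$ locally uniformly. Hence $\tilde\psi_i$ solves $-\Delta\tilde\psi=3U^2\tilde\psi$ in $\mathbb R^4_+$ with Neumann condition, and it inherits orthogonality to the limits of $Z_{0,i}$ and of the tangential $Z_{j,i}$. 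By reflection and the nondegeneracy of the bubble, the even extension lies in the kernel spanned by the $\partial_\delta$ and tangential derivatives; the normal derivative is odd and thus excluded. It follows that $\tilde\psi_i=0$.

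\emph{Step 3: contradiction.} Test the equation against $\psi_{i,n}$:
\[
\|\psi_{i,n}\|^2=\int\big(3V_i^2\psi_i^2-\beta V_j^2\psi_i^2-2\beta V_iV_j\psi_i\psi_j\big)+o(1).
\]
The $\beta$-terms either have a favorable sign ($-\beta V_j^2\psi_i^2\le0$) or are $o(1)$ by separation of the peaks. For the main term, split into $B_{R\delta_i}(\xi_i)$, where local strong $L^2$ convergence to $0$ applies, and its complement, where $\|V_i\|_{L^4}$ is small. This gives $\int V_i^2\psi_i^2=o(1)$, hence $\|\bm\psi_n\|_H\to0$, a contradiction. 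The main obstacle is Step 1 together with the precise control of the correction $W_{\lambda,\delta,\xi}$: in dimension four, $\lambda W$ terms are only logarithmically small, so one must check $\|V_i^2-U_i^2\|_{L^2}=o(1)$ using $W(r)\lesssim|\ln r|$ near $0$. This is what I would verify first.
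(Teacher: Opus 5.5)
Your proposal is correct and follows essentially the same blow-up argument as the paper. You show the $K_i$-components $w_{i,n}$ vanish by testing against the $Z_{j,i}$ and integrating by parts. You identify the rescaled weak limit as a Neumann solution of $-\Delta\tilde\psi=3U_{0,1}^2\tilde\psi$ on $\mathbb R^4_+$, which orthogonality and nondegeneracy force to vanish. Finally you use the sign of $\beta>0$ in the energy identity to get strong convergence to zero. The differences are only in presentation: you test directly with $\psi_{i,n}$, using $\langle w_{i,n},\psi_{i,n}\rangle=0$, instead of with $v_{in}=\psi_{in}-h_{in}-w_{in}$, and you spell out the boundary flattening, the reflection/parity argument, and the $B_{R\delta_i}$/complement splitting that the paper leaves implicit.
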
}
\begin{proof} Suppose by contradiction that, there exist sequences $\bm{\psi_{{n}}} \in \bm{K^{\perp}}_{\bm{d},\bm{\xi}}$ with $\left\|{\bm{\psi_{{n}}}}\right\|_{H}=1$, and $\bm{h_n}\in\bm{K^{\perp}}_{\bm{d},\bm{\xi}}$ such that   
    \begin{align*}
\left\|\bm\Pi^{\perp}\circ\bm{\mathcal{L}}(\bm{\psi_n})\right\|_{H}&=\left\|\bm{{h}_{n}}\right\|_{H} \rightarrow 0,\quad\text{as $n\rightarrow +\infty$}.
    \end{align*}
From \eqref{vec-1}, we derive that for $i,j=1,2$ with $i\neq j$,
\begin{align}\label{new-psi}
   \psi_{in}-i_{\lambda_n}^{*}\left(3V_{in}^2\psi_{in}-\beta\left(V_{jn}^2\psi_{in}+2V_{in}V_{jn}\psi_{jn}\right)\right)=h_{in}+w_{in},
\end{align}
where $w_{{in}} \in K_i$.

\medskip
\textbf{Step 1.}~ We prove that $\bm{w_{{n}}} \rightarrow 0$ strongly in $H$ as $n \rightarrow+\infty$. For this aim, we need to prove that for $i=1,2$, $w_{in}\rightarrow 0$  strongly in $H^1(\Omega)$ as $n \rightarrow+\infty$. 

Indeed, since $w_{in}\in K_i$, then there exist $\{c_{in}^{(k)}\}_{k=0}^{3}$, such that 
\begin{align}
    \label{w-1}\|w_{in}\|^2=\sum_{k=0}^3\left(c_{in}^{(k)}\right)^2\sigma_{kk}+\sum_{k,j=0,\atop k\neq j}^3\left(c_{in}^{(k)}\,c_{in}^{(j)}\right)\sigma_{kj},
\end{align}
where
\begin{align}\label{useful}
\sigma_{00}:=&\displaystyle\int_{\Omega}\left|\nabla\left(\delta_{in}\,\frac{\partial U_{in}}{\partial \delta_i}\right)\right|^2+\lambda_n\displaystyle\int_{\Omega}\left|\delta_{in}\,\frac{\partial U_{in}}{\partial \delta_i}\right|^2=\widetilde{A_0}+O(\delta_{in}),\nonumber\\\sigma_{kk}:=&\displaystyle\int_{\Omega}\left|\nabla\left(\delta_{in}\,\frac{\partial U_{in}}{\partial t_{k,i}}\right)\right|^2+\lambda_n\displaystyle\int_{\Omega}\left|\delta_{in}\,\frac{\partial U_{in}}{\partial t_{k,i}}\right|^2=\widetilde{A_1}+O(\delta_{in}),\nonumber\\
\sigma_{0j}:=&\displaystyle\int_{\Omega}\nabla\left(\delta_{in}\,\frac{\partial U_{in}}{\partial\delta_{i}}\right)\nabla\left(\delta_{in}\,\frac{\partial U_{in}}{\partial t_{j,i}}\right)+\lambda_n\displaystyle\int_{\Omega}\left(\delta_{in}\,\frac{\partial U_{in}}{\partial \delta_{i}}\right)\left(\delta_{in}\,\frac{\partial U_{in}}{\partial t_{j,i}}\right)=o(1),\nonumber\\
\sigma_{kj}:=&\displaystyle\int_{\Omega}\nabla\left(\delta_{in}\,\frac{\partial U_{in}}{\partial t_{k,i}}\right)\nabla\left(\delta_{in}\,\frac{\partial U_{in}}{\partial t_{j,i}}\right)+\lambda_n\displaystyle\int_{\Omega}\left(\delta_{in}\,\frac{\partial U_{in}}{\partial t_{k,i}}\right)\left(\delta_{in}\,\frac{\partial U_{in}}{\partial t_{j,i}}\right)=o(1),
\end{align}
for some positive constants $\widetilde{A_0},\,\widetilde{A_1}$. On the other hand, from \eqref{new-psi}, we have 
\begin{align}
    \label{w-2}
  \langle w_{in},w_{in}\rangle=& \langle \psi_{in},w_{in}\rangle-\langle i_{\lambda_n}^{*}\left(3V_{in}^2\psi_{in}-\beta\left(V_{jn}^2\psi_{in}+2V_{in}V_{jn}\psi_{jn}\right)\right),w_{in}\rangle-\langle h_{in},w_{in}\rangle\nonumber\\
  =&\sum_{k=0}^3c_{in}^{(k)}\Bigg[\displaystyle\int_{\Omega}\left(\lambda_n Z_{k,in}\psi_{in}+6U_{in}W_{in}Z_{k,in}\psi_{in}-3W^2_{in}Z_{k,in}\psi_{in}\right)+\displaystyle\int_{\partial\Omega}\frac{\partial Z_{k,in}}{\partial \nu}\psi_{in}\nonumber\\
  &\qquad\,\quad\,\qquad-\beta\displaystyle\int_{\Omega}Z_{k,in}\left(V_{jn}^2\psi_{in}+2V_{in}V_{jn}\psi_{jn}\right)\Bigg]\nonumber\\
  =&\sum_{k=0}^3c_{in}^{(k)}\left(\lambda_n\delta_{in}|\ln{\delta_{in}}|^{\frac{1}{2}}+\delta_{in}+\delta^{\frac{3}{2}}_{in}|\ln{\delta_{in}}|^{\frac{1}{2}}+\left(\displaystyle\int_{\partial\Omega}\left|\frac{\partial Z_{k,in}}{\partial \nu}\right|^{\frac{3}{2}}\right)^{\frac{2}{3}}+{|\beta|\delta_{in}\delta_{jn}}\right)\|\bm{\psi_{n}}\|_{H}\nonumber\\ 
  \longrightarrow&\, 0,
\end{align}
since \begin{align*}
\left(\displaystyle\int_{\partial\Omega}\left|\frac{\partial Z_{k,in}}{\partial \nu}\right|^{\frac{3}{2}}\right)^{\frac{2}{3}}=&\begin{cases}
\delta_{in}\left(\displaystyle\int_{\partial\Omega}\left|\frac{\partial^{2} U_{in}}{\partial\nu\partial \delta_i}\right|^{\frac{3}{2}}\right)^{\frac{2}{3}}\lesssim \delta_{in}|\ln{\delta_{in}}|^{\frac{2}{3}},\quad &k=0,\\
\delta_{in}\left(\displaystyle\int_{\partial\Omega}\left|\frac{\partial^{2} U_{in}}{\partial\nu\partial t_{k,i}}\right|^{\frac{3}{2}}\right)^{\frac{2}{3}}\lesssim \delta_{in},\quad &k=1,2,3.
    \end{cases}
\end{align*}
Thus, from \eqref{w-1} and \eqref{w-2}, we conclude that $w_{{in}} \rightarrow 0$ strongly in $H^{1}(\Omega)$ as $n \rightarrow+\infty$, which implies that $\bm{w_{{n}}} \rightarrow 0$ strongly in $H$ as $n \rightarrow+\infty$.
\medskip

{\textbf{Step 2.}~We prove that  for $i=1,2$, \( \widetilde{\psi}_{in}\rightharpoonup 0 \) weakly in \(D^{1,2}(\R^4_{+})\) as $n \rightarrow+\infty$, where for any $x\in\Omega$,
$$
    \label{trans}\widetilde{\psi}_{in}(y):=\delta_{in}\psi_{in}\left(\delta_{in} y+\xi_{in}\right),\quad y\in\Omega_{in}:=\frac{\Omega-\xi_{in}}{\delta_{in}},\quad i=1,2.
$$ }
On the one hand, define \begin{align}
    \label{def-vn}v_{in}:=\psi_{in}-h_{in}-w_{in},
\end{align} then by the definition of $i^*_{\lambda}$, \eqref{new-psi} is equivalent to
\begin{align}\label{new-psii}
    \begin{cases}
-\Delta v_{in}+\lambda_n  v_{in}- 3V_{in}^2\,v_{in}=3V_{in}^2( h_{in}+w_{in})-\beta\left(V_{jn}^2\psi_{in}+2V_{in}V_{jn}\psi_{jn}\right), &\text{ in $\Omega$},\\
\partial_{\nu}v_{in}=0,&\text{ on $\partial\Omega$.}
\end{cases}
\end{align}
From the definition of weak solutions to \eqref{new-psii}, we derive that for any $\Phi\in H^{1}(\Omega)$, 
\begin{align}
    \label{weak-solu}
    &\displaystyle\int_{\Omega}\nabla v_{in}\nabla\Phi+\lambda_n\displaystyle\int_{\Omega}v_{in}\,\Phi -3\displaystyle\int_{\Omega}V_{in}^2\,v_{in}\,\Phi\nonumber\\=&3\displaystyle\int_{\Omega}V_{in}^2( h_{in}+w_{in})\Phi-\beta\displaystyle\int_{\Omega}\left(V_{jn}^2\psi_{in}+2V_{in}V_{jn}\psi_{jn}\right)\Phi.
\end{align}
Now  choose $\chi\in D^{1,2}(\R^4_+)$ and define $$\widehat{\chi_{in}}(x):=\frac{1}{\delta_{in}}\chi\left(\frac{x-\xi_{in}}{\delta_{in}}\right)\in H^{1}(\Omega),\quad i=1,2.$$
Replacing $\Phi$ in \eqref{weak-solu} by $\widehat{\chi_{in}}$, we have
\begin{align}
    \label{weak-solu-22}
   &\displaystyle\int_{\Omega}\nabla v_{in}\nabla\widehat{\chi_{in}}+\lambda_n\displaystyle\int_{\Omega}v_{in}\,\widehat{\chi_{in}} -3\displaystyle\int_{\Omega}V_{in}^2\,v_{in}\,\widehat{\chi_{in}}\nonumber\\=&3\displaystyle\int_{\Omega}V_{in}^2( h_{in}+w_{in})\widehat{\chi_{in}}-\beta\displaystyle\int_{\Omega}\left(V_{jn}^2\psi_{in}+2V_{in}V_{jn}\psi_{jn}\right)\widehat{\chi_{in}}.
\end{align}

In addition, for any $x\in\Omega$, define
\begin{align}
    \label{trans-2}\widetilde{v}_{in}(y):=\delta_{in}v_{in}\left(\delta_{in} y+\xi_{in}\right),\quad y\in\Omega_{in},
\end{align} 
 we claim that $\widetilde{v}_{in}\in H^1\left(\Omega_{in}\right)$. Indeed, a direct computation yields that $$\displaystyle\int_{\Omega_{in}}|\nabla\widetilde{\psi}_{in}|^2\leq \displaystyle\int_{\Omega_{in}}|\nabla\widetilde{\psi}_{in}|^2+\lambda_n\delta_{in}^2\displaystyle\int_{\Omega_{in}}|\widetilde{\psi}_{in}|^2\lesssim\|\bm{\psi_n}\|^2_{H}=1,$$
which implies that $\widetilde{\psi}_{in}\in H^1\left(\Omega_{in}\right)$. Then combining with $h_{in},\,w_{in}\to 0$ strongly in $H^1(\Omega)$, we obtain that $\widetilde{v}_{in}\in H^1\left(\Omega_{in}\right)$. Thus, there exists $\widetilde{v}_{i}\in D^{1,2}(\R^4_+)$ such that $\widetilde{v}_{in}\rightharpoonup \widetilde{v}_i$ weakly in $D^{1,2}(\R^4_+)$.
Moreover, from \eqref{weak-solu-22}, we derive that for any   $\beta\in(0,b)$,
\begin{align*}
    &\displaystyle\int_{\Omega_{in}}\nabla \widetilde{v}_{in}\nabla{\chi}+\lambda_n\delta_{in}^2\displaystyle\int_{\Omega_n}\widetilde{v}_{in}\,{\chi} \nonumber\\&-3\displaystyle\int_{\Omega_{in}}\Big(U_{0,1}^2-2\delta_{in}U_{0,1}\,W_{in}\left(\delta_{in} y+\xi_{in}\right)+\delta_{in}^2W_{in}^2\left(\delta_{in} y+\xi_{in}\right)\Big)\widetilde{v}_{in}\,{\chi}\nonumber\\=&3\displaystyle\int_{\Omega_{in}}{\chi}\left(\widetilde{h}_{in}+\widetilde{w}_{in}\right)\Big(U_{0,1}^2-2\delta_{in}U_{0,1}\,W_{in}\left(\delta_{in} y+\xi_{in}\right)+\delta_{in}^2W_{in}^2\left(\delta_{in} y+\xi_{in}\right)\Big)\nonumber\\&+2\beta\,{\delta}^3_{in}\displaystyle\int_{\Omega_{in}}V_{in}\left(\delta_{in} y+\xi_{in}\right)V_{jn}\left(\delta_{in} y+\xi_{in}\right)\psi_{jn}\left(\delta_{in} y+\xi_{in}\right){\chi}\nonumber\\&-\beta\,{\delta}^2_{in}\displaystyle\int_{\Omega_{in}}V^2_{jn}\left(\delta_{in} y+\xi_{in}\right)\widetilde{\psi}_{in}(y){\chi}\nonumber\\=&o_n(1)+|\beta|O\left(\delta_{in}\delta_{jn}|\ln{\delta_{in}}|+\delta_{in}^2\delta_{jn}^2\right)\longrightarrow 0,\qquad\text{as $n\to+\infty$,}
\end{align*}
then we have
\begin{align*}
    \displaystyle\int_{\R^4_+}\nabla \widetilde{v}_i\,\nabla{\chi}-3\displaystyle\int_{\R^4_+}U^2_{0,1}\widetilde{v}_i\,{\chi}=0,\quad\text{for any $\chi\in D^{1,2}(\R^4_+)$,}
\end{align*}
which implies  that $\widetilde{v}_i$ solves
\begin{align*}
    \begin{cases}
-\Delta \widetilde{v}_i= 3U_{0,1}^2\,\widetilde{v}_i, &\text{ in $\R^4_+$},\\
\partial_{\nu} \widetilde{v}_i=0,&\text{ on $\partial\R^4_+$,}
\end{cases}
\end{align*}
and { $$\widetilde{v}_i \mathop{//}  span\left\{\frac{\partial U_{0,1}}{\partial s_j},\,j=0,1,2,3\right\},$$}
where $\frac{\partial U_{0,1}}{\partial s_0}:=\frac{\partial U_{0,1}}{\partial \delta}$, and $\frac{\partial U_{0,1}}{\partial s_j}:=\frac{\partial U_{0,1}}{\partial t_j},\,j=1,2,3.$ In particular,
$$\frac{\partial U_{0,1}}{\partial s_j}\lesssim U_{0,1},\quad j=0,1,2,3.$$

\medskip
On the other hand, we claim that for $i=1,2$,
$$\left\langle\widetilde{v}_i,\frac{\partial U_{0,1}}{\partial s_j}\right\rangle=0,\quad j=0,1,2,3.$$
Since $\psi_{in}\in K_i^{\perp}$, that is,
\begin{align*}
    \displaystyle\int_{ \Omega} \nabla \psi_{in} \cdot \nabla Z_{j,in}+\lambda_n\displaystyle\int_{ \Omega}\psi_{in}\,Z_{j,in}=0,\quad j=0,1,2,3.
\end{align*}
then by \eqref{def-vn} and the fact that $h_{in},\,w_{in}\to 0$ strongly in $H^1(\Omega)$, we have
\begin{align*}
    &\displaystyle\int_{ \Omega} \nabla v_{in} \cdot \nabla Z_{j,in}+\lambda_n\displaystyle\int_{ \Omega}v_{in}\,Z_{j,in}\nonumber\\=&-\left(\displaystyle\int_{ \Omega} \nabla \left(h_{in}+w_{in}\right) \cdot \nabla Z_{j,in}+\lambda_n\displaystyle\int_{ \Omega}\left(h_{in}+w_{in}\right) \,Z_{j,in}\right)\nonumber\\
    \longrightarrow&0,\quad\text{as $n\to+\infty$.}
\end{align*}
Then taking the transformation \eqref{trans-2}, we have
\begin{align*}
    &\displaystyle\int_{ \Omega_{in}} \nabla \widetilde{v}_{in} \cdot \nabla \left(\frac{\partial U_{0,1}}{\partial s_j}\right)+\lambda_n\delta_{in}^2\displaystyle\int_{\Omega_{in}}\widetilde{v}_{in}\frac{\partial U_{0,1}}{\partial s_j}\nonumber\\=&-\left(\displaystyle\int_{ \Omega_{in}} \nabla \left(\widetilde{h}_{in}+\widetilde{w}_{in}\right) \cdot\nabla \left(\frac{\partial U_{0,1}}{\partial s_j}\right)+\lambda_n\delta_{in}^2\displaystyle\int_{ \Omega_{in}}\left(\widetilde{h}_{in}+\widetilde{w}_{in}\right) \, \left(\frac{\partial U_{0,1}}{\partial s_j}\right)\right)\nonumber\\
    \longrightarrow&0,\quad\text{as $n\to+\infty$.}
\end{align*}
Passing to the limit $n\to+\infty$, we derive that
\begin{align}
    \label{final} \displaystyle\int_{ \R^4_+} \nabla \widetilde{v}_i \cdot \nabla \left(\frac{\partial U_{0,1}}{\partial s_j}\right)=0.
\end{align}

Thus, from \eqref{final}, we conclude that $ \widetilde{v}_i\equiv 0$, which implies that 
\begin{align*}
    \widetilde{\psi}_i=\widetilde{v}_i+\widetilde{h}_i+\widetilde{w}_i=0,\quad\text{where $\widetilde{h}_i=\lim_{n\to+\infty}\widetilde{h}_{in}=0$ and $\widetilde{w}_i=\lim_{n\to+\infty}\widetilde{w}_{in}=0$.} 
\end{align*}
Therefore, for $i=1,2$, \( \widetilde{\psi}_{in}\rightharpoonup 0 \) weakly in \(D^{1,2}(\R^4_{+})\) as $n \rightarrow+\infty$.

{\textbf{Step 3.}~We prove that \( {\psi}_{in}\to 0 \) strongly in \(H^{1}(\Omega)\) as $n \rightarrow+\infty$}. For this aim, we firstly prove that  \( {v}_{in}\to 0 \) strongly in \(H^{1}(\Omega)\) as $n \rightarrow+\infty$.  

Indeed, a direct calculation yields that
\begin{align*}
    \|v_{in}\|^2=&\underbrace{3\displaystyle\int_{ \Omega} \left(U_{in}^2-2U_{in}W_{\lambda_n,in}+W_{\lambda_n,in}^2\right)\,v_{in}^2}_{:=(i)}+\underbrace{3\displaystyle\int_{ \Omega} V_{in}^2\,v_{in}(h_{in}+w_{in})}_{:=(ii)}\\
    &-\beta\displaystyle\int_{ \Omega} \underbrace{\big(V_{jn}^2v_{in}^2}_{:=(iii)}-\underbrace{V_{jn}^2v_{in}\left(h_{in}+w_{in}\right)}_{:=(iv)}\big)\\
    &+2\beta\displaystyle\int_{ \Omega} \big(\underbrace{V_{in}\,V_{jn}\,v_{in}\,v_{jn}}_{:=(v)}-\underbrace{V_{in}\,V_{jn}\,v_{in}\,(h_{jn}+w_{jn})}_{:=(vi)}\big),
\end{align*}
where
\begin{align*}
    (i)
    =&\underbrace{3\displaystyle\int_{ \Omega_{in}} U_{0,1}^2\,\widetilde{v}_{{in}}^2}_{:=(1)}+\underbrace{O\left(\left\|W_{\lambda_n,in}\right\|_{L^4(\Omega)}^2\,\left\|v_{in}\right\|_{L^4(\Omega)}^2+\left\|U_{in}\right\|_{L^4(\Omega)}\,\left\|W_{\lambda_n,in}\right\|_{L^4(\Omega)}\,\left\|v_{in}\right\|_{L^4(\Omega)}^2\right)}_{:=(2)},
\end{align*}
since $U_{0,1}^2\in L^{2}(\R^4_{+})$ and \( \widetilde{v}_{in}\rightharpoonup 0 \) weakly in \(D^{1,2}(\R^4_{+})\) as $n \rightarrow+\infty$, then $(1)\to 0$ as $n \rightarrow+\infty$. Moreover, by \eqref{properties-1}, we derive that as $n \rightarrow+\infty$,
\begin{align}\label{W-N}
\left\|W_{\lambda_n,in}\right\|_{L^4(\Omega)}^4=&\lambda_n^4\delta_{in}^4\left(\displaystyle\int_{ \Omega}\left|W\left(\sqrt{\lambda_n}|x-\xi_n|\right)\right|^4\right)
\lesssim\lambda^2_{n}\delta_{in}^4\left(\ln{\lambda_n}\right)^4\to 0,
\end{align}
then  $$(2)= o_{n}(1)\left\|v_{in}\right\|_{H^1(\Omega)}^2\to 0,\quad\text{as $n \rightarrow+\infty$,}$$  which implies that \begin{align}
    \label{(i)}
    (i)\to 0,\quad\text{as $n \rightarrow+\infty$.}
\end{align}
In addition, since $h_{in},\,w_{in}\to 0$  strongly in \(H^{1}(\Omega)\) as $n \rightarrow+\infty$, then
\begin{align}
    \label{(ii)}
    (ii)\lesssim\left\|V_{in}\right\|^2_{L^4(\Omega)}\,\left\|h_{in}+w_{in}\right\|_{L^4(\Omega)}\,\left\|v_{in}\right\|_{L^4(\Omega)}\to 0,\quad\text{as $n \rightarrow+\infty$.}
\end{align}
Moreover, since $\beta>0$, then 
\begin{align}
    \label{new-iii}
    (iii)\leq 0,
\end{align}
and for any $\beta\in (0,b)$, 
\begin{align}\label{new-iv}
    (iv)\lesssim|\beta|\left\|V_{jn}\right\|^2_{L^4(\Omega)}\,\left\|h_{in}+w_{in}\right\|_{L^4(\Omega)}\,\left\|v_{in}\right\|_{L^4(\Omega)}\to 0,\quad\text{as $n \rightarrow+\infty$.}
\end{align}
In addition, 
\begin{align}
    \label{(iii)}
    (v)+(vi)\lesssim&|\beta|\left\|V_{in}V_{jn}\right\|_{L^2(\Omega)}\left\|v_{in}\right\|_{L^4(\Omega)}\left(\left\|v_{jn}\right\|_{L^4(\Omega)}+\left\|h_{jn}+w_{jn}\right\|_{L^4(\Omega)}\right)\nonumber\\\lesssim&|\beta|\Big(\delta_{1n}\delta_{2n}|\ln{\left(\delta_{1n}\delta_{2n}\right)}|^{\frac{1}{2}}\Big)\left(\left\|v_{jn}\right\|_{L^4(\Omega)}+o_n(1)\right)\left\|v_{in}\right\|_{L^4(\Omega)}\to 0,\quad\text{as $n \rightarrow+\infty$.}
\end{align}

Thus, from \eqref{(i)}, \eqref{(ii)}, \eqref{new-iii}, \eqref{new-iv} and  \eqref{(iii)}, we conclude that 
\begin{align*}
    \left\|v_{in}\right\|_{H^1(\Omega)}^2\leq 0+o_{n}(1)\to 0,\quad\text{as $n \rightarrow+\infty$},
\end{align*}
which implies that \( {v}_{in}\to 0 \) strongly in \(H^{1}(\Omega)\) as $n \rightarrow+\infty$. Then since $h_{in},\,w_{in}\to 0$  strongly in \(H^{1}(\Omega)\) as $n \rightarrow+\infty$, we obtain that
\( {\psi}_{in}\to 0 \) strongly in \(H^{1}(\Omega)\) as $n \rightarrow+\infty$, $i=1,2$. 

\medskip
\textbf{Step 4.}~We prove that the operator $$\Pi^{\perp}_{\bm{d},\bm{\xi}}\circ\bm{\mathcal{L}}_{\bm{d},\bm{\xi}}:= \left(\Pi_1^{\perp}\circ\mathcal{L}_1,\Pi_2^{\perp}\circ\mathcal{L}_2\right)$$ is an invertible operator. Since $i_\lambda^*$ is a compact operator from $L^{\frac{4}{3}}(\Omega)$ to $H^1(\Omega)$, and $\Pi_i^{\perp}\circ\mathcal{L}_i:=Id-K_i$, where $K_i$ is a compact operator. Since we have proved that 
$$\left\|\Pi^{\perp}_{\bm{d},\bm{\xi}}\circ\bm{\mathcal{L}}_{\bm{d},\bm{\xi}}(\bm{\psi})\right\|_{H}=\sum_{i=1}^2\left\|\Pi_i^{\perp}\circ\mathcal{L}_i(\bm{\psi})\right\|\geq C_0\left\|\bm{\psi}\right\|_{H}, \quad\text{$\forall \bm{\psi}\in \bm{K^{\perp}}_{\bm{d},\bm{\xi}}$,}$$
 for some constant $C_0>0$, which implies that $\Pi^{\perp}_{\bm{d},\bm{\xi}}\circ\bm{\mathcal{L}}_{\bm{d},\bm{\xi}}$ is injective. Thus, by Fredholm's alternative Theorem, $\Pi^{\perp}_{\bm{d},\bm{\xi}}\circ\bm{\mathcal{L}}_{\bm{d},\bm{\xi}}$ is surjective. Hence, the operator $\Pi^{\perp}_{\bm{d},\bm{\xi}}\circ\bm{\mathcal{L}}_{\bm{d},\bm{\xi}}$ is invertible, and $\left(\Pi^{\perp}_{\bm{d},\bm{\xi}}\circ\bm{\mathcal{L}}_{\bm{d},\bm{\xi}}\right)^{-1}$ is continuous. This completes the proof.
 
    \end{proof}
    
    Next, we compute the size of the error.
    
    \begin{Lem}\label{errore}Given any $\eta\in(0,1)$ and $b>0$, there exists $c>0$ and $\lambda_0>0$ such that for any $\lambda \in (\lambda_0,+\infty)$, for any $\left(\bm{d}, \bm{\xi}\right)\in \mathcal{O}_\eta$ and for any $\beta\in (0,b)$, it holds true that
    \begin{align}
    \label{error-size-1-vec} \|\bm{\mathcal{E}}\|_{H} \leq c \sum_{i=1}^2\delta_i|\ln{\delta_i}|^{\frac{2}{3}}.
\end{align}
\end{Lem}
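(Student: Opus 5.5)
Writing $V_i:=V_{\lambda,\delta_i,\xi_i}$, the plan is to use the equation \eqref{a-d} satisfied by $V_i$ to express $V_i$ itself through $i^*_\lambda$ plus a boundary correction. By the weak formulation, for every $\varphi\in H^1(\Omega)$,
\begin{align*}
\langle V_i,\varphi\rangle=\int_\Omega\Big(U_{\delta_i,\xi_i}^3+\lambda\big(U_{\delta_i,\xi_i}-\tfrac{\alpha\delta_i}{|x-\xi_i|^2}\big)\Big)\varphi+\int_{\partial\Omega}\partial_\nu V_i\,\varphi .
\end{align*}
Hence
\begin{align*}
\langle \mathcal E_i,\varphi\rangle=\int_\Omega\big(V_i^3-U_{\delta_i,\xi_i}^3\big)\varphi-\lambda\int_\Omega\Big(U_{\delta_i,\xi_i}-\tfrac{\alpha\delta_i}{|x-\xi_i|^2}\Big)\varphi-\int_{\partial\Omega}\partial_\nu V_i\,\varphi-\beta\int_\Omega V_iV_j^2\varphi .
\end{align*}
Taking the supremum over $\|\varphi\|\le1$, it suffices to bound each term by $c\,\delta_i|\ln\delta_i|^{2/3}$ (or better). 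For the interior terms we use H\"older's inequality with $\|\varphi\|_4\lesssim\|\varphi\|$ (uniformly in $\lambda$); for the boundary term we use the trace embedding $H^1(\Omega)\hookrightarrow L^3(\partial\Omega)$, which is uniform for $\lambda\ge1$, so we need $\|\partial_\nu V_i\|_{L^{3/2}(\partial\Omega)}$.

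\textbf{The interior terms.} First, $|V_i^3-U^3|\lesssim U^2W_{\lambda,\delta_i,\xi_i}+W^3_{\lambda,\delta_i,\xi_i}$, and we bound it in $L^{4/3}$. By \eqref{ww-1} and the expansion of $K_1$ in Proposition \ref{bessel}, $W_{\lambda,\delta_i,\xi_i}\lesssim\lambda\delta_i(1+|\ln(\sqrt\lambda|x-\xi_i|)|)$ for $|x-\xi_i|\le\lambda^{-1/2}$ and $\lesssim\delta_i|x-\xi_i|^{-2}$ beyond. Rescaling $x=\xi_i+\delta_iy$ gives $\|U^2W\|_{4/3}\lesssim\lambda\delta_i^2\cdot|\ln(\lambda\delta_i^2)|\sim \delta_i/\ln\lambda\cdot\ln\lambda\lesssim\delta_i$, using $\lambda\delta_i=d_i/\ln\lambda$. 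The $W^3$ term is smaller, by \eqref{W-N}. Second, the term $\lambda(U-\alpha\delta_i|x-\xi_i|^{-2})=-\lambda\alpha\delta_i^3/(|x-\xi_i|^2(\delta_i^2+|x-\xi_i|^2))$ has $L^{4/3}$ norm $\lambda\delta_i^3\big(\int (\delta_i^2+r^2)^{-4/3}r^{-8/3}r^3dr\big)^{3/4}\sim\lambda\delta_i^3\,\delta_i^{-3/2}\cdot$(log-free), i.e.\ $\lesssim \lambda\delta_i^{2}\ll\delta_i$. Third, the coupling term: since $|\xi_1-\xi_2|\ge2\eta$, split $\Omega$ into the balls $B_\eta(\xi_1)$, $B_\eta(\xi_2)$ and the rest. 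Near $\xi_i$ we have $V_j\lesssim\delta_j$, so $\|V_iV_j^2\|_{4/3}\lesssim\delta_j^2\|V_i\|_{4/3}\lesssim\delta_j^2\delta_i|\ln\delta_i|^{3/4}$. Near $\xi_j$ it is symmetric, with $\|V_j^2\|_{4/3}=\|V_j\|_{8/3}^2\lesssim\delta_j$, giving $\delta_i\delta_j$. Elsewhere the term is $O(\delta_i\delta_j^2)$. All of these are $o(\delta_i)$, uniformly for $\beta\in(0,b)$.

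\textbf{The boundary term}, which I expect to be the main obstacle and the source of the exponent $2/3$. We write $\partial_\nu V_i=\partial_\nu U_{\delta_i,\xi_i}-\partial_\nu W_{\lambda,\delta_i,\xi_i}$ in the local graph coordinates $x_4=g(x')$ at $\xi_i=0$. Since $\nabla U\propto -\delta_i x/(\delta_i^2+|x|^2)^2$ and $\nu\cdot x=O(|x'|^2)$ on $\partial\Omega$ (the boundary is tangent to $\{x_4=0\}$ at $\xi_i$), we get $|\partial_\nu U|\lesssim\delta_i|x'|^2/(\delta_i^2+|x'|^2)^2$ near $\xi_i$, and $O(\delta_i)$ away from it. Then
\begin{align*}
\int_{\partial\Omega}|\partial_\nu U|^{3/2}\lesssim\delta_i^{3/2}\int_0^{1}\frac{r^3\,r^2}{(\delta_i^2+r^2)^3}dr\lesssim\delta_i^{3/2}|\ln\delta_i|,
\end{align*}
so that $\|\partial_\nu U\|_{L^{3/2}(\partial\Omega)}\lesssim\delta_i|\ln\delta_i|^{2/3}$: exactly the claimed rate. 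For $W_{\lambda,\delta_i,\xi_i}$, radiality gives $\partial_\nu W_{\lambda,\delta_i,\xi_i}=\alpha\lambda^{3/2}\delta_iW'(\sqrt\lambda r)\,\frac{x\cdot\nu}{r}$ with $x\cdot\nu=O(r^2)$. Combined with \eqref{properties-2}, this gives $|\partial_\nu W_\lambda|\lesssim\lambda\delta_i$ for $r\le\lambda^{-1/2}$ and $\lesssim\delta_i r^{-2}$ for $r\ge\lambda^{-1/2}$. Integrating over the 3-dimensional boundary, the inner region contributes $(\lambda\delta_i)^{3/2}\lambda^{-3/2}=\delta_i^{3/2}$ and the outer region $\delta_i^{3/2}\int_{\lambda^{-1/2}}r^{-3}r^2dr\lesssim\delta_i^{3/2}\ln\lambda$. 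Hence $\|\partial_\nu W_\lambda\|_{L^{3/2}}\lesssim\delta_i(\ln\lambda)^{2/3}\sim\delta_i|\ln\delta_i|^{2/3}$. The delicate points are checking the geometric factor $x\cdot\nu=O(|x'|^2)$ uniformly in $\xi_i\in\partial\Omega$, and making sure that the log from $W$ near $0$ (behaving like $|\ln r|$) does not worsen the exponent. Summing the estimates over $i=1,2$ yields \eqref{error-size-1-vec}.
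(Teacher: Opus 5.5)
Your proposal is correct and is essentially the paper's argument: the paper tests the equation for $\bar H_i=\mathcal{E}_i$ against $\mathcal{E}_i$ itself instead of taking a supremum over $\|\varphi\|\le 1$, but it splits into the same terms. These are $U^2W$, $UW^2$, $W^3$, $\lambda(U-\alpha\delta_i|x-\xi_i|^{-2})$, the Neumann datum $\partial_\nu U-\partial_\nu W_{\lambda,\delta_i,\xi_i}$ in $L^{3/2}(\partial\Omega)$ paired through the $L^3(\partial\Omega)$ trace, and the coupling; as in your computation, the boundary term alone produces the factor $|\ln\delta_i|^{2/3}$.

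There is one harmless slip: in the $\lambda(U-\alpha\delta_i|x-\xi_i|^{-2})$ term the radial integral is of order $\delta_i^{-4/3}$, so the factor is $\delta_i^{-1}$ rather than $\delta_i^{-3/2}$. That is what actually yields your (correct) conclusion $\lambda\delta_i^2$.
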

\begin{proof}
We know that
$
\|\bm{\mathcal{E}}\|_H=\sum_{i=1}^2\|\mathcal{E}_i\|
$ with 
\begin{align*}
    \mathcal{E}_i:= \underbrace{i_\lambda^{*}\left(V_{\lambda,\delta_i,\xi_i}^3-\beta V_{\lambda,\delta_i,\xi_i}V_{\lambda,\delta_j,\xi_j}^2\right)}_{:=H_i}-V_{\lambda,\delta_i,\xi_i}.
\end{align*}
Denote $\bar H_i=H_i-V_{\lambda,\delta_i,\xi_i}$, $i=1,2$. Then a direct computation yields that
$$    
\begin{cases}
-\Delta \bar H_i\!+\!\lambda \bar H_i\!=-3 U_{ {\delta_i}, {\xi_i}}^2W_{\lambda,\delta_i,\xi_i}\!+\!3U_{ {\delta_i}, {\xi_i}}W_{\lambda,\delta_i,\xi_i}^2\!-\!W_{\lambda,\delta_i,\xi_i}^3\!-\!\lambda\!\left(\!U_{ {\delta_i}, {\xi_i}}-\frac{\delta_i}{|x-\xi_i|^{2}}\!\right)\!-\beta V_{\lambda,\delta_i,\xi_i}\!V_{\lambda,\delta_j,\xi_j}^2, &\text{ in $\Omega$},\\
\partial_{\nu} \bar H_i=-\partial_{\nu}\left( U_{ {\delta_i}, {\xi_i}}-W_{\lambda,\delta_i,\xi_i}\right),&\text{ on $\partial\Omega$.}\end{cases}
$$
Thus,
\begin{align*}
    &\|\bar H_i\|^2\\=&\displaystyle\int_{\Omega}\left({\underbrace{-3 U_{ {\delta_i}, {\xi_i}}^2\,W_{\lambda,\delta_i,\xi_i}}_{:=E_1}+\underbrace{3U_{ {\delta_i}, {\xi_i}}\,W_{\lambda,\delta_i,\xi_i}^2}_{:=E_2}-\underbrace{W_{\lambda,\delta_i,\xi_i}^3}_{:=E_3}-\underbrace{\lambda\left(U_{ {\delta_i}, {\xi_i}}-\frac{\delta_i}{|x-\xi_i|^{2}}\right)}_{:=E_4}}\right)\bar H_i\\&-\underbrace{\displaystyle{\int_{\partial\Omega}}\left(\frac{\partial U_{ {\delta_i}, {\xi_i}}}{\partial \nu}-\frac{\partial  W_{\lambda,\delta_i,\xi_i}}{\partial \nu}\right)\bar H_i dS}_{:=E_5}+\underbrace{\beta\displaystyle\int_{\Omega} V_{\lambda,\delta_i,\xi_i}\,V_{\lambda,\delta_j,\xi_j}^2\bar H_i}_{:=E_6}.
\end{align*}
In the following, we will estimate the terms $E_1$-$E_6$. For $E_1$, set $$y:=\frac{x-\xi_i}{\delta_i}\in \Omega_i:=\frac{\Omega-\xi_i}{\delta_i},\quad i=1,2,$$ then 
{\small
\begin{align}\label{e-1}
    \left\|E_1\right\|_{L^{\frac{4}{3}}(\Omega)}\leq &\left\|U_{ {\delta_i}, {\xi_i}}^2\,W_{\lambda,\delta_i,\xi_i}\right\|_{L^{\frac{4}{3}}(\Omega)}\|\bar H\|_{L^{4}(\Omega)}\nonumber\\
    \leq&\lambda\delta_i\left(\displaystyle \int_{\Omega}\frac{\delta_i^{\frac{8}{3}}}{\left( \delta_i^2+|x-\xi_i|^2 \right)^{\frac{8}{3}}}\,W^{\frac{4}{3}}\left( \sqrt{\lambda}|x-\xi_i|) \right) dx\right)^{\frac{3}{4}}\|\bar H\|\nonumber\\
\leq& \lambda\delta_i^{{2}}\left({\underbrace{\displaystyle \int_{B_{\frac{1}{\sqrt{\lambda}\delta_i}}(0)}\frac{1}{\left( 1+|y|^2 \right)^{\frac{8}{3}}}\,W^{\frac{4}{3}}\left( \sqrt{\lambda}\delta_i y \right) dy}_{E_{11}}+\underbrace{\displaystyle \int_{B^{c}_{\frac{1}{\sqrt{\lambda}\delta_i}}}\frac{1}{\left( 1+|y|^2 \right)^{\frac{8}{3}}}\,W^{\frac{4}{3}}\left( \sqrt{\lambda}\delta_i y \right) dy}_{E_{12}}}\right)^{\frac{3}{4}}\|\bar H\|,
\end{align}}
by \ref{properties-1}, we have \begin{align}\label{e11}
    E_{11}=\displaystyle \int_{B_{{1}/{\sqrt{\lambda}\delta_i}}}\frac{|\ln{ (\sqrt{\lambda}\delta_i)}|^{\frac{4}{3}}+|\ln{ y}|^{\frac{4}{3}}}{\left( 1+|y|^2 \right)^{\frac{8}{3}}}dy\lesssim|\ln{\delta_i}|^{\frac{4}{3}},
\end{align}
and
\begin{align}\label{e12}
E_{12}
\lesssim 
\left(\frac{1}{\lambda\delta_i^2}\right)^{\frac{4}{3}}
\int_{B^c_{{1}/{\sqrt{\lambda}\delta_i}}}
\frac{1}{|y|^{8}}
\, dy\lesssim\left({\lambda\delta_i^2}\right)^{\frac{2}{3}}\sim \left(\frac{\delta_i}{|\ln{\delta_i}|}\right)^{\frac{2}{3}}.
\end{align}
Hence, from \eqref{e-1}, \eqref{e11} and \eqref{e12}, we have
\begin{align*}
    \left\|E_1\right\|_{L^{\frac{4}{3}}(\Omega)}\lesssim \lambda\delta_i^{{2}}|\ln{\delta_i}|\|\bar H\|.
\end{align*}
Moreover,
{\small
\begin{align*}
&\left\|E_2\right\|_{L^{\frac{4}{3}}(\Omega)}\lesssim \left\|U_{ {\delta_i}, {\xi_i}}\,W^2_{\lambda,\delta_i,\xi_i}\right\|_{L^{\frac{4}{3}}(\Omega)}\|\bar H\|_{L^{4}(\Omega)}\nonumber\\
=&\lambda^{{2}}\delta_i^{{3}}\left({\displaystyle \int_{B_{\frac{1}{\sqrt{\lambda}}(\xi_i)}}\frac{1}{\left( \delta_i^2+|x-\xi_i|^2 \right)^{\frac{4}{3}}}\left|\ln{\left( \sqrt{\lambda}\left|x-\xi_i\right| \right) }\right|^{\frac{8}{3}}+\displaystyle \int_{\Omega\backslash B_{\frac{1}{\sqrt{\lambda}}(\xi_i)}}\frac{1}{\left( \delta_i^2+|x-\xi_i|^2 \right)^{\frac{4}{3}}}\frac{1}{\left|\sqrt{\lambda}\left(x-\xi_i\right) \right|^{\frac{16}{3}} }}\right)^{\frac{3}{4}}\|\bar H\|\nonumber\\
\lesssim&\lambda^{\frac{3}{2}}\delta_i^3\left(\ln{\lambda}\right)^{2}\|\bar H\|.
\end{align*}}
Using similar arguments as in \eqref{W-N}, we have
\begin{align*}
      \left\|E_3\right\|_{L^{\frac{4}{3}}(\Omega)}\lesssim\left(\displaystyle\int_{\Omega}\left|\lambda\delta_i W\left(\sqrt{\lambda}|x-\xi_i|\right)\right|^4\right)^{\frac{3}{4}}\|\bar H\|\lesssim \lambda^{\frac{3}{2}}\delta_i^3\left(\ln{\lambda}\right)^{3}\|\bar H\|,\end{align*}
For $E_4$, a direct calculation yields that
\begin{align*}
    \left\|E_4\right\|_{L^{\frac{4}{3}}(\Omega)}&\lesssim \left(\displaystyle\int_{\Omega}\left|\lambda\left(U_{ {\delta_i}, {\xi_i}}-\frac{\delta_i}{|x-\xi_i|^{2}}\right)\right|^{\frac{4}{3}}\right)^{\frac{3}{4}}\|\bar H\|\\&=\left(\displaystyle\int_{\Omega}\left|\frac{-\lambda\delta_i^3}{\left(\delta_i^2+|x-\xi_i|^2\right)|x-\xi_i|^{2}}\right|^{\frac{4}{3}}\right)^{\frac{3}{4}}\|\bar H\|\\&=\lambda\delta_i^2\left(\displaystyle\int_{\Omega_i}\frac{1}{|z|^{\frac{8}{3}}\left(1+|z|^2\right)^{\frac{4}{3}}}dz\right)^{\frac{3}{4}}\|\bar H\|\lesssim\lambda\delta_i^2\|\bar H\|,
\end{align*}
In addition, by the properties given in Proposition \ref{bessel} and Remark \ref{bessel-2}, {we have}
\begin{align*}
       \left\|E_5\right\|_{L^{\frac{3}{2}}(\partial\Omega)}&\lesssim\left[\left(\displaystyle{\int_{\partial\Omega}}\left|\frac{\partial U_{ {\delta_i}, {\xi_i}}}{\partial \nu}\right|^{\frac{3}{2}}\right)^{\frac{2}{3}}+ \left(\displaystyle{\int_{\partial\Omega}}\left|\frac{\partial W_{\lambda, {\delta_i}, {\xi_i}}}{\partial \nu}\right|^{\frac{3}{2}}\right)^{\frac{2}{3}}\right]\|\bar H\|_{L^{3}(\partial\Omega)}\\\lesssim&\left[\delta_i|\ln{\delta_i}|^{\frac{2}{3}}+\left(\displaystyle{\int_{\partial\Omega}}\left|\nu_x\cdot\nabla W_{\lambda, {\delta_i}, {\xi_i}}\right|^{\frac{3}{2}}\right)^{\frac{2}{3}}\right]\|\bar H\|\\\lesssim&\left[\delta_i|\ln{\delta_i}|^{\frac{2}{3}}+\left(\displaystyle{\int_{\partial\Omega}}\left| W^{\prime}_{\lambda, {\delta_i}, {\xi_i}}(x)\,|x-\xi_i|\right|^{\frac{3}{2}}\right)^{\frac{2}{3}}\right]\|\bar H\|\\=&\left[\delta_i|\ln{\delta_i}|^{\frac{2}{3}}+\left(\lambda^{\frac{3}2}\delta_i\right)\underbrace{\left(\displaystyle{\int_{\partial\Omega}}\left| W^{\prime}\left(\sqrt{\lambda}|x-\xi_i|\right)\right|^{\frac{3}{2}}\,|x-\xi_i|^{\frac{3}{2}}\right)^{\frac{2}{3}}}_{:=(iii)}\right]\|\bar H\|.
\end{align*}
Then from \eqref{properties-2}, we have
\begin{align*}
(iii)\lesssim{\!\left(\!\displaystyle{\int_{\partial\Omega\cap B_{\frac{1}{\sqrt{\lambda}}(\xi_i)}}}\left|{\frac{1}{\sqrt{\lambda}|x-\xi_i|}}\right|^{\frac{3}{2}}\!\left|x-\xi_i\right|^{\frac{3}{2}}+\displaystyle{\int_{\partial\Omega\cap B^c_{\frac{1}{\sqrt{\lambda}}(\xi_i)}}}\left|{\frac{1}{\sqrt{\lambda}|x-\xi_i|}}\right|^{\frac{9}{2}}\!\left|x-\xi_i\right|^{\frac{3}{2}}\right)^{\frac{2}{3}}}\!\lesssim\frac{\left(\ln{\lambda}\!\right)^{\frac{2}{3}}}{\lambda^{\frac{3}{2}}}.
\end{align*}

Thus, \begin{align*}
  \left\|E_5\right\|_{L^{\frac{3}{2}}(\partial\Omega)}  \lesssim \delta_i|\ln{\delta_i}|^{\frac{2}{3}}\|\bar H\|,
\end{align*}
Furthermore, for any $i,j=1,2$ with $j\neq i$, since $\forall x\in B_{\frac{\eta}{2}(\xi_i)}$ and for any $\eta\in (0,1)$, \begin{align*}
    \sqrt{\lambda}|x-\xi_j|\geq\sqrt{\lambda}\left(\left|\xi_i-\xi_j\right|-|x-\xi_i|\right)\geq \sqrt{\lambda}\eta\to+\infty, \text{as $\lambda\to +\infty$,} 
\end{align*} then by \eqref{properties-11}, we have\begin{align}\label{v-prop}
    V_{\lambda,\delta_j,\xi_j}(x)\lesssim\left(\frac{\delta_j^3}{(\delta_j^2+|x-\xi_j|^2)|x-\xi_j|^2}+\frac{{\lambda}^{1/4}\delta_j\,e^{-\lambda|x-\xi_j|}}{|x-\xi_j|^{\frac{3}{2}}}\right)\lesssim\delta_j,
\end{align}
thus,
\begin{align*}
    \left\|E_6\right\|_{L^{\frac{3}{2}}(\partial\Omega)}\lesssim|\beta|\left(\displaystyle\int_{B_{\frac{\eta}{2}(\xi_i)}}+\displaystyle\int_{B_{\frac{\eta}{2}(\xi_j)}}+\displaystyle\int_{\Omega\backslash(B_{\frac{\eta}{2}(\xi_i)}\bigcup B_{\frac{\eta}{2}(\xi_j)})}V_{\lambda,\delta_i,\xi_i}^{\frac{4}{3}}\,V_{\lambda,\delta_j,\xi_j}^{\frac{8}{3}}\right)^{\frac{3}{4}}\lesssim|\beta|\delta_i\delta_j\|\bar H\|.
\end{align*}
Finally, collecting all the previous estimates we get \eqref{error-size-1-vec}.
\end{proof}

\begin{proof}[\textbf{Proof of Proposition \ref{error-size}: completed}]~
The claim follows using a standard contraction mapping argument using Lemma \ref{invert}, Lemma \ref{errore}  and the following immediate estimate of the non-linear term
$$ \| \bm{\mathcal{N}}(\bm{\psi}) \|_{H} \lesssim \left(1+|\beta|\right)\left(\left\|{\bm{\psi}}\right\|_H^2+\left\|{\bm{\psi}}\right\|_H^3\right).$$
\end{proof}

\vspace{0.2cm}
\section{Proof of  Proposition \ref{energy-esti}} \label{appC}
\setcounter{equation}{0}
\renewcommand{\theequation}{C.\arabic{equation}}

\begin{proof}[\textbf{Proof of Proposition \eqref{energy-esti}: part (ii)}]${}$\\

Firstly, by\eqref{ef}, we have 
\begin{align}\label{new-eff}
&E\left(V_{\lambda,\delta_1,\xi_1}+\psi_{1,{\bm{d},\bm{\xi}}},V_{\lambda,\delta_2,\xi_2}+\psi_{2,{\bm{d},\bm{\xi}}}\right)\nonumber\\=&I\left(V_{\lambda,\delta_1,\xi_1}\right)+I\left(V_{\lambda,\delta_2,\xi_2}\right)-\frac{\beta}{2}\displaystyle\int_{\Omega}V_{\lambda,\delta_1,\xi_1}^2\,V_{\lambda,\delta_2,\xi_2}^2+O\left(\|\bm{\mathcal{E}}\|_{H}\|\bm{\psi}_{{{\bm{d},\bm{\xi}}}}\|_{H}\right),
\end{align}
where \begin{align*}
I\left(V_{\lambda,\delta_i,\xi_i}\right)
=\frac{1}{2}\displaystyle\int_{\Omega} \left|\nabla V_{\lambda,\delta_i,\xi_i}\right|^2
+ \frac{\lambda}{2}\displaystyle\int_{\Omega} \left|V_{\lambda,\delta_i,\xi_i}\right|^2
- \frac{1}{4} \displaystyle\int_{\Omega}\left|V_{\lambda,\delta_i,\xi_i}\right|^4.
\end{align*}
 By the properties given in Proposition \ref{bessel} and Remark \ref{bessel-2}, we have
\begin{align}\label{enery-expansion}
&I\left(V_{\lambda,\delta_i,\xi_i}\right)\nonumber\\=&\frac{1}{4} \displaystyle\int_{\Omega}U_{\delta_i,\xi_i}^4+\frac{1}{2}\displaystyle\int_{\partial\Omega}\partial_{\nu}U_{\delta_i,\xi_i} \,U_{\delta_i,\xi_i}+\frac{\lambda}{2}\displaystyle\int_{\Omega}V_{\lambda,\delta_i,\xi_i}\,U_{\delta_i,\xi_i}+\frac{\lambda}{2}\displaystyle\int_{\Omega}W_{\lambda,\delta_i,\xi_i}\left(\frac{\alpha\,\delta_i}{|x-\xi_i|^2}- U_{\delta_i,\xi_i}\right) \nonumber\\
&+O\left(\displaystyle\int_{\Omega}U_{\delta_i,\xi_i}^2\,W_{\lambda,\delta_i,\xi_i}^2\right)+O\left(\displaystyle\int_{\partial\Omega}\left|\partial_{\nu}U_{\delta_i,\xi_i}\right|\,W_{\lambda,\delta_i,\xi_i}\right)+O\left(\displaystyle\int_{\partial\Omega}\left|\partial_{\nu}W_{\lambda,\delta_i,\xi_i}\right|\,W_{\lambda,\delta_i,\xi_i}\right)\nonumber\\
=&\frac{1}{4} \displaystyle\int_{\Omega}U_{\delta_i,\xi_i}^4+\frac{1}{2}\displaystyle\int_{\partial\Omega}\partial_{\nu}U_{\delta_i,\xi_i} \,U_{\delta_i,\xi_i}+\frac{\lambda}{2}\displaystyle\int_{\Omega}V_{\lambda,\delta_i,\xi_i}\,U_{\delta_i,\xi_i}+o(\delta_i)\nonumber\\=&c_0-c_1H(\xi_i)\delta_i+\frac{1}{2}\underbrace{\lambda{\displaystyle\int_{\Omega}V_{\lambda,\delta_i,\xi_i}\,U_{\delta_i,\xi_i}}}_{:=Q}+o(\delta_i).
\end{align}
 where we used the fact proved in \cite[Lemma 2.2]{adi-man}  that
$$\frac{1}{4} \displaystyle\int_{\Omega}U_{\delta_i,\xi_i}^4+\frac{1}{2}\displaystyle\int_{\partial\Omega}\partial_{\nu}U_{\delta_i,\xi_i} \,U_{\delta_i,\xi_i}=c_0-c_1H(\xi_i)\delta_i+o(\delta_i),$$
for some positive constants $c_0$ and $c_1$.
\\

Now, we want to compute the term $Q$. Since
$$V_{\lambda,\delta_i,\xi_i}=U_{\delta_i,\xi_i}-W_{\lambda,\delta_i,\xi_i},$$
then
\begin{align}\label{main}
    &{\lambda}\displaystyle\int_{\Omega}V_{\lambda_i,\delta_i,\xi_i}\,U_{\delta_i,\xi_i}\nonumber\\=&\underbrace{{\lambda}\displaystyle\int_{\Omega\cap B_{\sqrt{\delta_i}}(\xi_i)}\,U^2_{\delta_i,\xi_i}}_{:=Q_1}-\underbrace{{\lambda}\displaystyle\int_{\Omega\cap B_{\sqrt{\delta_i}}(\xi)}W_{\lambda,\delta_i,\xi_i}\,U_{\delta_i,\xi_i}}_{:=Q_2}+\underbrace{{\lambda}\displaystyle\int_{\Omega\backslash B_{\sqrt{\delta_i}}(\xi_i)}V_{\lambda,\delta_i,\xi_i}\,U_{\delta_i,\xi_i}}_{:=Q_3}.
\end{align}
Set $z:=\frac{x-\xi_i}{\delta_i}\in B_{1/\sqrt{\delta_i}}(0)$, then
\begin{align}
    \label{q1}
    Q_{1}=\lambda\delta_i^2\displaystyle\int_{B_{1/\sqrt{\delta_i}}}U_{0,1}^2=\bar{a}_0\lambda\delta_i^2|\ln{\delta_i}|,\quad\text{for some constant $\bar{a}_0>0$.}
\end{align}
    For $Q_2$, since $$\sqrt{\lambda}|x-\xi_i|\leq \sqrt{\lambda\,\delta_i}\to 0,\quad\text{as $\delta_i\to 0$,}$$
then from \eqref{properties-1}, 
\begin{align*}
    W_{\lambda,\delta_i,\xi_i}(x)\lesssim\lambda\delta_i\big(\ln{\lambda}+|\ln{|x-\xi_i|}|\big),
\end{align*}
which implies that
\begin{align}\label{q-2}
Q_{2}\lesssim\lambda^2\delta_i^2\left(\displaystyle\int_{\Omega\cap B_{\sqrt{\delta_i}}(\xi_i)}\frac{\ln{\lambda}+|\ln{|x-\xi_i|}|}{\delta_i^2+|x-\xi_i|^2}\right)\lesssim\lambda^2\delta_i^3|\ln{\delta_i}|=o\left(\lambda\delta_i^2|\ln{\delta_i}|\right).
\end{align}
For $Q_3$, by \eqref{ww-1}, we have
\begin{align*}
    V_{\lambda,\delta_i,\xi_i}(x)
=\alpha\left(\underbrace{\frac{-\delta_i^3}{|x-\xi_i|^2\left(\delta_i^2+|x-\xi_i|^2\right)}}_{:=m_1}+\underbrace{\frac{\sqrt{\lambda}\delta_i\, K_{1}(\sqrt{\lambda}|x-\xi_i|)}{|x-\xi_i|}}_{:=m_2}\right),
\end{align*}
then  we deduce that
\begin{align*}
    Q_3=&{\lambda}\displaystyle\int_{\Omega\backslash B_{\sqrt{\delta_i}}(\xi_i)}\left(m_1+m_2\right)\,U_{\delta_i,\xi_i}\\=&\underbrace{{\lambda}\displaystyle\int_{\Omega\backslash B_{\sqrt{\delta_i}}(\xi_i)}m_1\,U_{\delta_i,\xi_i}}_{:=M_1}+\underbrace{{\lambda}\displaystyle\int_{B_{1/\sqrt{\lambda}}(\xi_i)\backslash B_{\sqrt{\delta_i}}(\xi_i)}m_2\,U_{\delta_i,\xi_i}}_{:=M_2}+\underbrace{{\lambda}\displaystyle\int_{\Omega\backslash B_{1/\sqrt{\lambda}}(\xi_i)}m_2\,U_{\delta_i,\xi_i}}_{:=M_3},
\end{align*}
with
\begin{align*}
     M_1=&\alpha^2\,{\lambda}\displaystyle\int_{\Omega\backslash B_{\sqrt{\delta_i}}(\xi_i)}\frac{-\delta_i^4}{|x-\xi_i|^2\left(\delta_i^2+|x-\xi_i|^2\right)^2}\\
     \lesssim&\lambda\delta_i^2\displaystyle\int_{\Omega_i\backslash B_{1/\sqrt{\delta_i}}(0)}\frac{1}{|z|^2\left(1+|z|^2\right)^2}\lesssim\lambda\delta_i^3=o\left(\lambda\delta_i^2|\ln{\delta_i}|\right),
\end{align*}
where $z:=\frac{x-\xi_i}{\delta_i}\in \Omega_i:=\frac{\Omega-\{\xi_i\}}{\delta_i}$. Moreover, since $\forall x\in B_{1/\sqrt{\lambda}}(\xi_i)\backslash B_{\sqrt{\delta_i}}(\xi_i)$, we have
\begin{align*}
    \sqrt{\lambda}|x-\xi_i|\in\left[\sqrt{\lambda\delta_i},1\right],
\end{align*}
then using the property of $K_1$ given in \eqref{properties-11}, we derive that
\begin{align*}
    M_2\lesssim&\lambda^{\frac{3}{2}}\delta_i^2\displaystyle\int_{B_{1/\sqrt{\lambda}}(\xi_i)\backslash B_{\sqrt{\delta_i}}(\xi_i)}\frac{ K_{1}(\sqrt{\lambda}|x-\xi_i|)}{|x-\xi_i|\left(\delta_i^2+|x-\xi_i|^2\right)}\\
    =&\lambda^{\frac{3}{2}}\delta_i^2\displaystyle\int_{B_{1/\sqrt{\lambda}}(\xi_i)\backslash B_{\sqrt{\delta_i}}(\xi_i)}\frac{1}{|x-\xi_i|\!\left(\delta_i^2+|x-\xi_i|^2\right)}\,\frac{1}{\sqrt{\lambda}|x-\xi_i|}\\&+\lambda^{\frac{3}{2}}\delta_i^{2}\displaystyle\int_{B_{1/\sqrt{\lambda}}(\xi_i)\backslash B_{\sqrt{\delta_i}}(\xi_i)}\frac{1}{|x-\xi_i|\!\left(\delta_i^2+|x-\xi_i|^2\right)}\,\frac{\sqrt{\lambda}|x-\xi_i|}{2}\left(\ln\frac{\sqrt{\lambda}|x-\xi_i|}{2}\!+\!\left(\!\gamma-\frac{1}{2}\!\right)\!\right)\\
    &+O\left(\lambda^{\frac{3}{2}}\delta_i^2\displaystyle\int_{B_{1/\sqrt{\lambda}}(\xi_i)\backslash B_{\sqrt{\delta_i}}(\xi_i)}\frac{1}{|x-\xi_i|\!\left(\delta_i^2+|x-\xi_i|^2\right)}\left(\sqrt{\lambda}|x-\xi_i|\right)^3\left|\ln (\sqrt{\lambda}|x-\xi_i|)\right|\right)\\
    \lesssim&\lambda\delta_i^2\displaystyle\int_{B_{1/\sqrt{\lambda}\delta_i}(0)\backslash B_{1/\sqrt{\delta_i}}(0)}\frac{1}{|z|^2\,(1+|z|^2)}+\lambda^{{2}}\delta_i^{2}\displaystyle\int_{B_{1/\sqrt{\lambda}}(\xi_i)\backslash B_{\sqrt{\delta_i}}(\xi_i)}\frac{\left|\ln|\sqrt{\lambda\delta_i}|\right|+\left|\gamma-\frac{1}{2}\right|}{\delta_i^2+|x-\xi_i|^2}\\
    &+O\left(\lambda^3\delta_i^2\left|\ln|\sqrt{\lambda\delta_i}|\right|\displaystyle\int_{B_{1/\sqrt{\lambda}}(\xi_i)\backslash B_{\sqrt{\delta_i}}(\xi_i)}\frac{|x-\xi_i|^2}{\delta_i^2+|x-\xi_i|^2}\right)
    \\\lesssim&\lambda\delta_i^{2}\left|\ln|\ln{\delta_i}|\right|=o\left(\lambda\delta_i^2|\ln{\delta_i}|\right).
\end{align*}
In addition, $\forall\,x\in\Omega\backslash B_{1/\sqrt{\lambda}}(\xi_i)$, we have $\sqrt{\lambda}|x-\xi_i|\geq 1$, then by \eqref{properties-11}, we have
\begin{align*}
    M_{3}\lesssim&\lambda^{\frac{3}{2}}\delta_i^2\displaystyle\int_{\Omega\backslash B_{1/\sqrt{\lambda}}(\xi_i)}\frac{ K_{1}(\sqrt{\lambda}|x-\xi_i|)}{|x-\xi_i|\left(\delta_i^2+|x-\xi_i|^2\right)}\\
    =&\lambda^{\frac{3}{2}}\delta_i^2\displaystyle\int_{\Omega\backslash B_{1/\sqrt{\lambda}}(\xi_i)}\frac{1}{|x-\xi_i|\left(\delta_i^2+|x-\xi_i|^2\right)}\,\frac{ e^{-\sqrt{\lambda}|x-\xi_i|}}{\left(\sqrt{\lambda}|x-\xi_i|\right)^{1/2}}\\
    \lesssim&\lambda^{\frac{5}{4}}\delta_i^2\displaystyle\int_{\Omega\backslash B_{1/\sqrt{\lambda}}(\xi_i)}\frac{e^{-\sqrt{\lambda}|x-\xi_i|}}{|x-\xi_i|^{7/2}}\lesssim\lambda\delta_i^2=o\left(\lambda\delta_i^2|\ln{\delta_i}|\right).
\end{align*}
Hence, from the estimates of $M_1$-$M_3$, we derive that
\begin{align}\label{q-3}
    Q_{3}=o\left(\lambda\delta_i^2|\ln{\delta_i}|\right).
\end{align}
Then from \eqref{main}, \eqref{q1}, \eqref{q-2} and \eqref{q-3}, we conclude that
\begin{align}
    \label{new-main}
   Q= {\bar{a}_0} \lambda\delta_i^2|\ln{\delta_i}|+o\left(\lambda\delta_i^2|\ln{\delta_i}|\right),
\end{align}
for some constant $\bar{a}_0>0$.

Therefore, by \eqref{enery-expansion} and \eqref{new-main}, we conclude that
\begin{align}\label{newnew-ef}
    I\left(V_{\lambda,\delta_i,\xi_i}\right)=c_0-c_1H(\xi_i)\delta_i+c_2\lambda\delta_i^2|\ln{\delta_i}|+o\left(\delta_i+\lambda\delta_i^2|\ln{\delta_i}|\right),\ i=1,2
\end{align}
for some postive constants $c_i$'s.

Finally, we compute the coupling term appeared in \eqref{new-eff}. Indeed, from \eqref{v-prop}, a direct computation yields that for any $\beta\in(0,b)$
\begin{align}\label{esti-coup}
    -\frac{\beta}{2}\displaystyle\int_{\Omega}V_{\lambda,\delta_1,\xi_1}^2\,V_{\lambda,\delta_2,\xi_2}^2\lesssim&|\beta|\left[\left(\displaystyle\int_{B_{\frac{\eta}{2}(\xi_1)}}+\displaystyle\int_{B_{\frac{\eta}{2}(\xi_2)}}+\displaystyle\int_{\Omega\backslash(B_{\frac{\eta}{2}(\xi_1)}\bigcup B_{\frac{\eta}{2}(\xi_2)})}\right)V_{\lambda,\delta_1,\xi_1}^{2}\,V_{\lambda,\delta_2,\xi_2}^{2}\right]\nonumber\\
    \lesssim&|\beta|\left(\delta_2^2\displaystyle\int_{B_{\frac{\eta}{2}(\xi_1)}}V_{\lambda,\delta_1,\xi_1}^{2}+\delta_1^2\displaystyle\int_{B_{\frac{\eta}{2}(\xi_2)}}V_{\lambda,\delta_2,\xi_2}^{2}+O\left(\delta_1^2\delta_2^2\right)\right)\nonumber\\
    \lesssim&|b|\left(\delta_1^2\delta_2^2|\ln{\delta_1}|+\delta_1^2\delta_2^2|\ln{\delta_2}|+\delta_1^2\delta_2^2\right)\lesssim \delta_1^2\delta_2^2|\ln{(\delta_1\delta_2)}|.
\end{align}

Thus, by \eqref{new-eff}, \eqref{newnew-ef}, \eqref{esti-coup} and \eqref{error-size-1-vec}, the claim follows.
 \end{proof}

\begin{proof}[\textbf{Proof of Proposition \eqref{energy-esti}: part (i)}]${}$\\

{  On the one hand, by the definition \eqref{e-f-11},\begin{align*}
        \widetilde{I}{(\bm{\delta},\bm{\xi})}:=E_{\beta}\left(V_{\lambda,\delta_1,\xi_1}+\psi_{1,\bm{\delta},\bm{\xi}},V_{\lambda,\delta_2,\xi_2}+\psi_{2,\bm{\delta},\bm{\xi}}\right),
    \end{align*}
and  Proposition \ref{error-size}, we derive that if
\begin{align*}
    E^{\prime}\left(V_{\lambda,\delta_1,\xi_1}+\psi_{1,\bm{\delta},\bm{\xi}},V_{\lambda,\delta_2,\xi_2}+\psi_{2,\bm{\delta},\bm{\xi}}\right)=0,
\end{align*}
then $\widetilde{E}^{\prime}{(\bm{d},\bm{\xi})}=0.$ On the other, assume}  $\nabla \widetilde{E} {(\bm{d},\bm{\xi})}=0.$
Then for $i,k=1,2$ with $i\neq k$, 
\begin{align*}
    0=&\partial_{\delta_{i}}\widetilde{E}{(\bm{d},\bm{\xi})}\\=&\left\langle E^{\prime}\left(V_{\lambda,\delta_1,\xi_1}+\psi_{1,\bm{d},\bm{\xi}},V_{\lambda,\delta_2,\xi_2}+\psi_{2,\bm{d},\bm{\xi}}\right),{ \left(\frac{\partial \psi_{k,\bm{d},\bm{\xi}}}{\partial\delta_i},\frac{\partial}{\partial\delta_i}\left(V_{\lambda,\delta_i,\xi_i}+\psi_{i,\bm{d},\bm{\xi}}\right),\frac{\partial \psi_{k,\bm{d},\bm{\xi}}}{\partial\delta_i}\right)}\right\rangle,
\end{align*}
this implies that 
\begin{align}\label{first-0}
0=\sum_{j=0}^{3}\,c_{j,i}\left(\underbrace{\left\langle Z_{j,i},\,\delta_i\,\frac{\partial V_{\lambda,\delta_i,\xi_i}}{\partial\delta_i}\right\rangle}_{:=J_1}+\underbrace{\sum_{k=1}^2c_{j,k}\left\langle Z_{j,k},\delta_i\,\frac{\partial\psi_{k,\bm{d},\bm{\xi}}}{\partial\delta_i}\right\rangle}_{:=J_2}\right),
\end{align}
where $Z_{j,k}$, $j=0,1,2,3$ and $k=1,2$, are given in \eqref{ker-element}. 

Similarly, for $i,k=1,2$ with $i\neq k$ and $h=1,2,3$, we have
 \begin{align*}
    0=&\partial_{\xi_{h,i}}\widetilde{E}{(\bm{d},\bm{\xi})}\\=&\left\langle E^{\prime}\left(V_{\lambda,\delta_1,\xi_1}+\psi_{1,\bm{d},\bm{\xi}},V_{\lambda,\delta_2,\xi_2}+\psi_{2,\bm{d},\bm{\xi}}\right),{ \left(\frac{\partial \psi_{k,\bm{d},\bm{\xi}}}{\partial\xi_{h,i}},\frac{\partial}{\partial\xi_{h,i}}\left(V_{\lambda,\delta_i,\xi_i}+\psi_{i,\bm{d},\bm{\xi}}\right),\frac{\partial \psi_{k,\bm{d},\bm{\xi}}}{\partial\xi_{h,i}}\right)}\right\rangle,
\end{align*}
this implies that 
\begin{align}\label{first-h}
0=\sum_{j=0}^{3}\,c_{j,i}\left(\underbrace{\left\langle Z_{j,i},\,\delta_i\,\frac{\partial V_{\lambda,\delta_i,\xi_i}}{\partial\xi_{h,i}}\right\rangle}_{:=J_3}+\underbrace{\sum_{k=1}^2c_{j,k}\left\langle Z_{j,k},\delta_i\,\frac{\partial\psi_{k,\bm{d},\bm{\xi}}}{\partial\xi_{h,i}}\right\rangle}_{:=J_4}\right)
\end{align}

By \eqref{first-0} and \eqref{first-h}, we need to compute the estimates of $J_1$-$J_4$ to solve the unknown coefficients $\{c_{j,i}\}$, for ${j=0,1,2,3}$ and $i=1,2$. Firstly, by
\eqref{a-d}, a direct calculation yields that for $j=0,1,2,3,$
\begin{align*}
\begin{cases}
    -\Delta \left(\frac{\partial}{\partial s_j} V_{\lambda,\delta_i,\xi_i}\right)+\lambda \left(\frac{\partial}{\partial s_j} V_{\lambda,\delta_i,\xi_i}\right)=3U^{2}_{ {\delta_i}, {\xi_i}}\frac{\partial U_{\delta_i,\xi_i}}{\partial s_j}+\lambda\left(\frac{\partial U_{\delta_i,\xi_i}}{\partial s_j} -\frac{\partial}{\partial s_j}\left(\frac{\alpha\,\delta_i}{|x-\xi_i|^{2}}\right)\right),\quad &\text{ in $\Omega$,}\\
    \partial_{\nu}\left(\frac{\partial}{\partial s_j} V_{\lambda,\delta_i,\xi_i}\right)=\partial_{\nu} \left(\frac{\partial}{\partial s_j} U_{ {\delta_i}, {\xi_i}}-\frac{\partial}{\partial s_j}W_{\lambda,\delta_i,\xi_i}\right),&\text{ on $\partial\Omega$,}
\end{cases}
\end{align*}
where
\begin{align*}
    \frac{\partial}{\partial s_0} V_{\lambda,\delta_i,\xi_i}:=\frac{\partial}{\partial \delta_i} V_{\lambda,\delta_i,\xi_i},\quad\frac{\partial}{\partial s_j} V_{\lambda,\delta_i,\xi_i}:=\frac{\partial}{\partial t_{j,i}} V_{\lambda,\delta_i,\xi_i},\quad j=1,2,3.
\end{align*}
Then from \eqref{useful}, we have
\begin{align*}
   J_{1}=&\left\langle Z_{0,i},\,\delta_i\,\frac{\partial V_{\lambda,\delta_i,\xi_i}}{\partial\delta_i}\right\rangle+\sum_{j=1}^3\left\langle Z_{j,i},\,\delta_i\,\frac{\partial V_{\lambda,\delta_i,\xi_i}}{\partial\delta_i}\right\rangle\\=&\delta_i^2\displaystyle\int_{\Omega}\frac{\partial U_{\delta_i,\xi_i}}{\partial\delta_i}\, \left[3U^{2}_{ {\delta_i}, {\xi_i}}\frac{\partial U_{\delta_i,\xi_i}}{\partial \delta_i} +\lambda\left(\frac{\partial U_{\delta_i,\xi_i}}{\partial \delta_i} -\frac{\partial}{\partial \delta_i}\left(\frac{\alpha\,\delta_i}{|x-\xi_i|^{2}}\right)\right)\right]\\
   &+\delta_i^2\displaystyle\int_{\partial\Omega}\frac{\partial U_{\delta_i,\xi_i} }{\partial\delta_i}\,\left(\frac{\partial^2\,U_{ {\delta_i}, {\xi_i}}}{\partial{\nu} \partial \delta_i} -\frac{\partial^2\,W_{\lambda,\delta_i,\xi_i}}{\partial{\nu}\partial \delta_i}\right)+o(1)\\
   =&\displaystyle\int_{\Omega}\left|\nabla\left(\delta_i\,\frac{\partial U_{\delta_i,\xi_i}}{\partial \delta_i}\right)\right|^2+\lambda\displaystyle\int_{\Omega}\left|\delta_i\,\frac{\partial U_{\delta_i,\xi_i}}{\partial \delta_i}\right|^2\\&\underbrace{-\lambda\delta_i^2\displaystyle\int_{\Omega}\frac{\partial U_{\delta_i,\xi_i}}{\partial \delta_i}\frac{\partial}{\partial \delta_i}\left(\frac{\alpha\,\delta_i}{|x-\xi_i|^{2}}\right)}_{:=J_{11}}-\underbrace{\delta_i^2\displaystyle\int_{\partial\Omega}\frac{\partial U_{\delta_i,\xi_i}}{\partial \delta_i}\frac{\partial^2\,W_{\lambda,\delta_i,\xi_i}}{\partial{\nu}\partial \delta_i}}_{:=J_{12}}+o(1)\\=& \widetilde{A_0}+J_{11}+J_{12}+o(1),
\end{align*}
since $Z_{j,i}$ given in \eqref{ker-element} verifies
\begin{align*}
   Z_{j,i}\lesssim U_{\delta_i,\xi_i},\quad j=0,1,2,3,
\end{align*}
then
\begin{align*}
J_{11}\lesssim&\lambda\delta_i\displaystyle\int_{\Omega}\frac{1}{|x-\xi_i|^{2}}\left(\delta_i\,\frac{\partial U_{\delta_i,\xi_i}}{\partial \delta_i}\right)\lesssim\lambda\delta_i\left(\displaystyle\int_{\Omega}\frac{1}{|x-\xi_i|^{\frac{8}{3}}}\right)^{\frac{3}{4}}\left\|U_{\delta_i,\xi_i}\right\|_{L^4(\Omega)}\lesssim\lambda\delta_i=o(1).
\end{align*}
Moreover, from properties \eqref{properties-2} and {the assumption that $\xi_i\equiv 0$ via the translation and rotation}, we have
\begin{align*}
    J_{12}\lesssim&\delta_i\displaystyle\int_{\partial\Omega}\left(\delta_i\,\frac{\partial U_{\delta_i,\xi_i}}{\partial \delta_i}\right)\frac{\partial^2\,W_{\lambda,\delta_i,\xi_i}}{\partial{\nu}\partial \delta_i}\lesssim\delta_i\left\|U_{\delta_i,\xi_i}\right\|_{L^3(\partial\Omega)}\left(\displaystyle\int_{\partial\Omega}\left|\frac{\partial^2\,W_{\lambda,\delta_i,\xi_i}}{\partial{\nu}\partial \delta_i}\right|^{\frac{3}{2}}\right)^{\frac{2}{3}}\\
    \lesssim&\lambda\delta_i\left(\displaystyle\int_{\partial\Omega}\left|\frac{\partial\,W(\sqrt{\lambda}|x|)}{\partial{\nu}}\right|^{\frac{3}{2}}\right)^{\frac{2}{3}}\lesssim\lambda^{\frac{3}{2}}\delta_i\left(\displaystyle\int_{\partial\Omega}\left|W^{\prime}(\sqrt{\lambda}|x|)|x|\right|^{\frac{3}{2}}\right)^{\frac{2}{3}}\\
    \lesssim&\lambda^{\frac{3}{2}}\delta_i\left(\displaystyle\int_{\partial\Omega\cap B_{1/\sqrt{\lambda}}(0)}\left|\frac{1}{\sqrt{\lambda}|x|}|x|\right|^{\frac{3}{2}}+\displaystyle\int_{\partial\Omega\cap B^{c}_{1/\sqrt{\lambda}}(0)}\left|\frac{1}{\left(\sqrt{\lambda}|x|\right)^3}|x|\right|^{\frac{3}{2}}\right)^{\frac{2}{3}}\\
    \lesssim&\delta_i|\ln{\delta_i}|^{\frac{2}{3}}=o(1).
\end{align*}
Thus,
\begin{align}
\label{second-0}J_{1}=\widetilde{A_0}+o(1),
\end{align}
Moreover, by \eqref{useful} and \eqref{error-size-2}, we have
\begin{align}\label{j-2}
    J_{2}\lesssim\left(\delta_i+\lambda\delta_i\right)\left\|\partial_{\delta_i}\,\bm{\psi}_{{{\bm{d},\bm{\xi}}}}\right\|=o(1),\quad\text{since $\left\|\partial_{\delta_i}\,\bm{\psi}_{{{\bm{d},\bm{\xi}}}}\right\|\lesssim|\ln{\delta_i}|^{\frac{2}{3}}$.}
\end{align}

\vspace{0.2cm}
Similarly, for $h=1,2,3$, we derive that
\begin{align}
    \label{second-h}J_{3}=\widetilde{A_1}+o(1).
\end{align}
Indeed, from \eqref{useful}, we have
\begin{align*}
   J_{3}=&\left\langle Z_{h,i},\,\delta_i\,\frac{\partial V_{\lambda,\delta_i,\xi_i}}{\partial\xi_{h,i}}\right\rangle+\sum_{m=0,\atop m\neq h}^3\left\langle Z_{m,i},\,\delta_i\,\frac{\partial V_{\lambda,\delta_i,\xi_i}}{\partial\xi_{h,i}}\right\rangle\\=&\delta_i^2\displaystyle\int_{\Omega}\frac{\partial U_{\delta_i,\xi_i}}{\partial\xi_{h,i}}\, \left[3U^{2}_{ {\delta_i}, {\xi_i}}\left(\frac{\partial U_{\delta_i,\xi_i}}{\partial \xi_{h,i}} \right)+\lambda\left(\frac{\partial U_{\delta_i,\xi_i}}{\partial \xi_{h,i}} -\frac{\partial}{\partial \xi_{h,i}}\left(\frac{\alpha\,\delta_i}{|x-\xi_i|^{2}}\right)\right)\right]\\
   &+\delta_i^2\displaystyle\int_{\partial\Omega}\frac{\partial U_{\delta_i,\xi_i} }{\partial\xi_{h,i}}\,\left(\frac{\partial^2\,U_{ {\delta_i}, {\xi_i}}}{\partial{\nu} \partial \xi_{h,i}} -\frac{\partial^2\,W_{\lambda,\delta_i,\xi_i}}{\partial{\nu}\partial \xi_{h,i}}\right)+o(1)\\=&\displaystyle\int_{\Omega}\left|\nabla\left(\delta_i\,\frac{\partial U_{\delta_i,\xi_i}}{\partial \xi_{h,i}}\right)\right|^2+\lambda\displaystyle\int_{\Omega}\left|\delta_i\,\frac{\partial U_{\delta_i,\xi_i}}{\partial \xi_{h,i}}\right|^2\\&\underbrace{-\lambda\delta_i^2\displaystyle\int_{\Omega}\frac{\partial U_{\delta_i,\xi_i}}{\partial \xi_{h,i}}\frac{\partial}{\partial \xi_{h,i}}\left(\frac{\alpha\,\delta_i}{|x-\xi_i|^{2}}\right)}_{:=J_{31}}-\underbrace{\delta_i^2\displaystyle\int_{\partial\Omega}\frac{\partial U_{\delta_i,\xi_i}}{\partial \xi_{h,i}}\frac{\partial^2\,W_{\lambda,\delta_i,\xi_i}}{\partial{\nu}\partial \xi_{h,i}}}_{:=J_{32}}+o(1)\\=& \widetilde{A_1}+J_{31}+J_{32}+o(1),
\end{align*}
where
\begin{align*}
    J_{31}\lesssim&\lambda\delta_i^4\displaystyle\int_{\Omega}\frac{(x_{h,i}-\xi_{h,i})^2}{|x-\xi_{i}|^4}\,\frac{1}{\left(\delta_i^2+|x-\xi_i|^2\right)^2}=\lambda\delta_i^2\displaystyle\int_{\Omega_i}\frac{1}{|z|^2}\,\frac{1}{\left(1+|z|^2\right)^2}\lesssim\lambda\delta_i^2=o(1).
\end{align*}
Furthermore,  we have
\begin{align*}
  J_{32}=&\delta_i  \displaystyle\int_{\partial\Omega}\left(\delta_i\frac{\partial U_{\delta_i,\xi_i}}{\partial \xi_{h,i}}\right)\frac{\partial^2\,W_{\lambda,\delta_i,\xi_i}}{\partial{\nu}\partial \xi_{h,i}}\lesssim\delta_i  \displaystyle\int_{\partial\Omega}U_{\delta_i,\xi_i}\,\left|\frac{\partial^2\,W_{\lambda,\delta_i,\xi_i}}{\partial{\nu}\partial \xi_{h,i}}\right|\\
  \lesssim&\delta_i\left\|U_{\delta_i,\xi_i}\right\|_{L^3(\partial\Omega)}\left(\displaystyle\int_{\partial\Omega}\left|\frac{\partial^2\,W_{\lambda,\delta_i,\xi_i}}{\partial{\nu}\partial \xi_{h,i}}\right|^{\frac{3}{2}}\right)^{\frac{2}{3}},
\end{align*}
{we assume that $\xi_i\equiv 0$ by the translation and rotation}, then 
\begin{align*}
    \partial_{\xi_{h,i}}W_{\lambda,\delta_{i},\xi_{i}}=-\lambda^{\frac{3}{2}}\delta_{i}\, W^{\prime}(\sqrt{\lambda}|x|)\,\frac{x_h}{|x|},
\end{align*}
and a direct computation yields that
\begin{align*}
    \frac{\partial^2\,W_{\lambda,\delta,\xi}}{\partial{\nu}\partial \xi_{h,i}}=&-\lambda^{\frac{3}{2}}\delta_{i}\left(\partial_{\nu}(x_h)\frac{ W^{\prime}(\sqrt{\lambda}|x|)}{|x|}+\partial_{\nu}\left(\frac{ W^{\prime}(\sqrt{\lambda}|x|)}{|x|}\right)x_h\right)\\
    =&-\lambda^{\frac{3}{2}}\delta_{i}\left(\partial_{\nu}(x_h)\frac{ W^{\prime}(\sqrt{\lambda}|x|)}{|x|}+\left[\left(\partial_{\nu}W^{\prime}(\sqrt{\lambda}|x|)\right)\,\frac{1}{|x|}+\partial_{\nu}\left(|x|\right)\left(\frac{-W^{\prime}(\sqrt{\lambda}|x|)}{|x|^2}\right)\right]x_h\right)\\
    =&-\lambda^{\frac{3}{2}}\delta_i\left(2g_hx_h\frac{ W^{\prime}(\sqrt{\lambda}|x|)}{|x|}+\partial_{\nu}\left(|x|\right)\left[\left(W^{\prime\prime}(\sqrt{\lambda}|x|)\right)\,\frac{\sqrt{\lambda}}{|x|}+\left(\frac{-W^{\prime}(\sqrt{\lambda}|x|)}{|x|^2}\right)\right]x_h\right)\\
    \lesssim&\lambda^{\frac{3}{2}}\delta_i\left(\left|W^{\prime}(\sqrt{\lambda}|x|)\right|+\sqrt{\lambda}|x|\left|W^{\prime\prime}(\sqrt{\lambda}|x|)\right|\right),
\end{align*}
thus from properties \eqref{properties-2} and \eqref{properties-3}, we deduce that
\begin{align*}
    &\left(\displaystyle\int_{\partial\Omega}\left|\frac{\partial^2\,W_{\lambda,\delta_i,\xi_i}}{\partial{\nu}\partial \xi_{h,i}}\right|^{\frac{3}{2}}\right)^{\frac{2}{3}}\lesssim\lambda^{\frac{3}{2}}\delta_i\left(\displaystyle\int_{\partial\Omega}\left(\left|W^{\prime}(\sqrt{\lambda}|x|)\right|+\sqrt{\lambda}|x|\left|W^{\prime\prime}(\sqrt{\lambda}|x|)\right|\right)^{\frac{3}{2}}\right)^{\frac{2}{3}}\\=&\lambda^{\frac{3}{2}}\delta_i\Bigg(\displaystyle\int_{\partial\Omega\cap B_{1/\sqrt{\lambda}}(0)}\left(\left|W^{\prime}(\sqrt{\lambda}|x|)\right|+\sqrt{\lambda}|x|\left|W^{\prime\prime}(\sqrt{\lambda}|x|)\right|\right)^{\frac{3}{2}}\\&\qquad\,\,\,\quad+\displaystyle\int_{\partial\Omega\cap B^{c}_{1/\sqrt{\lambda}}(0)}\left(\left|W^{\prime}(\sqrt{\lambda}|x|)\right|+\sqrt{\lambda}|x|\left|W^{\prime\prime}(\sqrt{\lambda}|x|)\right|\right)^{\frac{3}{2}}\Bigg)^{\frac{2}{3}}\\
    \lesssim&\lambda^{\frac{3}{2}}\delta_i\left(\displaystyle\int_{\partial\Omega\cap B_{1/\sqrt{\lambda}}(0)}\left|\frac{1}{\sqrt{\lambda}|x|}\right|^{\frac{3}2}+\displaystyle\int_{\partial\Omega\cap B^{c}_{1/\sqrt{\lambda}}(0)}\left|\frac{1}{(\sqrt{\lambda}|x|)^3}\right|^{\frac{3}2}\right)^{\frac{2}{3}}\\
    \lesssim&\lambda^{\frac{1}{2}}\delta_i=o(1),
\end{align*}
which implies that \eqref{second-h} holds true. Moreover,
 \begin{align}\label{J-4}
    J_{4}\lesssim\left(\delta_i+\lambda\delta_i\right)\left\|\partial_{\xi_{h,i}}\,\bm{\psi}_{{{\bm{d},\bm{\xi}}}}\right\|=o(1),\quad\text{since $\left\|\partial_{\xi_{h,i}}\,\bm{\psi}_{{{\bm{d},\bm{\xi}}}}\right\|\lesssim \delta_i|\ln{\delta_i}|^{2/3}$.}
\end{align}

Thus, for the system consisting of \eqref{first-0} and \eqref{first-h},  we conclude that the coefficient matrix is invertible by \eqref{second-0}, \eqref{j-2}, \eqref{second-h} and \eqref{J-4}, which implies that all the coefficients $c_{j,i}=0$, $i=1,2$ and $j=0,1,2,3$. This completes the proof.
\end{proof}

\bibliographystyle{plain}
\bibliography{llv-new}

\end{document}